\documentclass[12pt,a4paper]{asl}

\usepackage{xspace}
\usepackage{dsfont}


\title{Indestructibility of Vop\v enka's Principle}

\author{Andrew D. Brooke-Taylor}
\revauthor{Brooke-Taylor, Andrew D.}

\address{Department of Mathematics\\
University of Bristol\\
University Walk\\
Bristol, BS8 1TW, UK}

\email{Andrew.Brooke-Taylor@bristol.ac.uk}

\thanks{This research was conducted at the University of Bristol 
with support from the 
Heilbronn Institute for Mathematical Research.}

\newcommand{\al}{\alpha}
\newcommand{\be}{\beta}
\newcommand{\ga}{\gamma}
\newcommand{\de}{\delta}
\newcommand{\ka}{\kappa}
\newcommand{\la}{\lambda}
\renewcommand{\phi}{\varphi}

\newcommand{\st}{\,|\,}
\newcommand{\forces}{\Vdash}
\newcommand{\sat}{\vDash}
\renewcommand{\P}{\mathbb{P}}
\newcommand{\Q}{\mathbb{Q}}
\newcommand{\restr}{\!\upharpoonright\!}
\DeclareMathOperator{\crit}{crit}
\DeclareMathOperator{\dom}{dom}
\newcommand{\HOD}{\textrm{HOD}}
\newcommand{\ORD}{\textrm{Ord}}
\DeclareMathOperator{\ot}{ot}
\DeclareMathOperator{\rank}{rank}
\DeclareMathOperator{\supp}{supp}
\DeclareMathOperator{\trcl}{trcl}
\newcommand{\calL}{\mathcal{L}}
\newcommand{\calM}{\mathcal{M}}
\newcommand{\calN}{\mathcal{N}}

\newcommand{\Vopenka}{Vop\v{e}nka\xspace}
\newcommand{\VPr}{Vop\v{e}nka's Principle\xspace}
\newcommand{\VP}{\textrm{VP}}

\newcommand{\Lstd}{\mathcal{L}_{\textrm{std}}}

\newcommand{\genval}[2]{#1_{#2}}

\newtheorem{thm}{Theorem}
\newtheorem{defn}[thm]{Definition}
\newtheorem{lemma}[thm]{Lemma}
\newtheorem{coroll}[thm]{Corollary}
\newtheorem{prop}[thm]{Proposition}

\newtheorem*{qn}{Open Question}

\newcommand{\Implies}{\Rightarrow}

\renewcommand{\iff}{\leftrightarrow}

\newcommand{\note}[1]{\relax}

\begin{document}

\begin{abstract}
We show that \VPr and \Vopenka cardinals are indestructible under 
reverse Easton forcing iterations of increasingly directed-closed
partial orders, without the need for any preparatory forcing.
As a consequence, we are able to prove the relative consistency of 
these large cardinal axioms with a variety of statements known to be 
independent of ZFC, such as the generalised continuum hypothesis, 
the existence of a definable well-order
of the universe, and the existence of morasses at many cardinals.
\end{abstract}

\maketitle

\section{Introduction}

\Vopenka's Principle is a large cardinal axiom that can readily be expressed in
either set- or category-theoretic terms.  It has received significant
attention in the latter context, yielding structural results for certain
important kinds of categories, as described in the final chapter of 
Ad\'amek and Rosick\'y's book~\cite{AdR:LPAC}.
This has led to the resolution
under the assumption of Vo\-p\v{e}nka's Principle of a long-standing open
question in algebraic topology:
Casacuberta, Scevenels and Smith~\cite{CSS:LCHL} 
have shown that if Vo\-p\v{e}nka's Principle holds, then
Bousfield localisation functors exist for all generalised cohomology theories.  

From a set-theoretic perspective, whilst it has received some attention
(for example in \cite{Bag:CnC},
\cite{Kan:VRP}, \cite{Pow:AHCVP}, 
\cite{Sato:DHLLC}, and \cite{SRK:SAIEE}), 
\VPr has generally been overshadowed by other large cardinal axioms,
and there are many natural questions regarding it that remain unanswered.
One aim of this article is to remedy the situation somewhat, 
providing the means for obtaining relative consistency results for
\VPr with various other statements known to be independent of ZFC.
Since \VPr lies beyond the scope of current inner model theory,
this entails an analysis of the interaction between \VPr and the other
standard technique for obtaining consistency results, namely,
Cohen's method of forcing.

Specifically, we consider the common approach to obtaining consistency results
for very large cardinals by forcing, 
whereby one starts with a model of ZFC containing
the desired large cardinals, and forces other statements to hold, whilst 
preserving the large cardinal of interest.  In many cases this may be achieved
by Silver's technique of lifting embeddings, with a generic chosen
to contain a particular ``master condition'' that forces the large
cardinal to be preserved (see for example Cummings~\cite{Cum:IFE}).  
Laver~\cite{Lav:prep} showed that with a suitable
preparatory forcing, a supercompact cardinal $\kappa$ can be made 
indestructible under $\kappa$-directed closed forcing, in that \emph{any}
generic for such a forcing will preserve the supercompactness of $\kappa$.
Since then a variety of such indestructibility results have been obtained
--- see for example
\cite{Apt02:SCFLI}, \cite{ApH99:UI}, \cite{ApS10:EUI},
\cite{GiS:CISCQH}, \cite{Ham:LP}, \cite{HJo:ISU} and \cite{Joh08:SUCMI}.
In each case, a preparatory forcing is used to ensure that the large
cardinal is (or cardinals are) indestructible in the extension universe.

We shall show here that \VPr is in fact always
indestructible under a useful class of forcings,
with no preparation necessary.
Specifically, we show that reverse Easton iterations of increasingly 
directed-closed forcings preserve \VPr.
As a warm-up, we first show that \VPr is indestructible under small forcing,
analogously to the L\'evy--Solovay Theorem for measurable cardinals.
Key to both arguments is the fact that \VPr is witnessed by many embeddings,
so it is not important to lift any particular one.  
It suffices to lift \emph{some} embedding for each proper class of structures,
and this is forced by a dense set of conditions in each case.

We have just alluded to the fact that
the na\"{\i}ve statement of Vo\-p\v{e}n\-ka's Principle 
involves proper classes. 
A class theory such as von Neumann-Bernays-G\"odel or Morse-Kelley
might consequently
seem like the ``right'' context in which to study Vop\v{e}nka's Principle,
particularly as we shall have to be careful about use of 
the Global Axiom of Choice.
However, we accede to modern set-theoretic tastes, and work within
Zermelo-Fraenkel set theory.  There are two approaches to this.
One is to take ``classes'' to mean definable classes,
and consider \VPr to be an axiom schema,
as for example in the recent work of Bagaria~\cite{Bag:CnC}.
Alternatively,
one can consider inaccessible cardinals $\ka$ such that $V_\ka$ satisfies
a class theoretic form of \VPr 
when all members of $V_{\ka+1}$ are taken to be classes, as in 
Kanamori~\cite{Kan:THI}.  
We shall first prove our indestructibility result for such
\emph{\Vopenka cardinals}, 
in order that the 
main ideas might not be obscured by the technicalities involved with 
dealing with definable proper classes.  
That done, we shall in Section~\ref{DVP} address 
those technicalities and how they may be overcome to yield the corresponding 
results for the definable class version of \VPr.

\section{Definitions}

In order to define \Vopenka's Principle, we
fix some model-theoretic notation.
It will be convenient to refer to 
a standard language
$\Lstd$ 
with one binary relation symbol $\epsilon$ and one
unary relation symbol $R$.  
Since any number of relations of any arity can be encoded in a single
binary relation 
(see for example Pultr and Trnkov\'a~\cite[Theorem~II.5.3]{PuT:CATRGSC}), 
this is tantamount to considering
all languages.
Unless otherwise specified, $M$ will denote the domain of any model denoted
$\calM$, and $N$ will denote the domain of any model denoted $\calN$.

\begin{defn}\label{VPA}
Let $\ka$ be an inaccessible cardinal and let 
$A$ be a subset of $V_\ka$ of cardinality $\ka$ such that every 
element of $A$ is an $\Lstd$-structure.  
We denote by $\VP(A)$ the statement that 
there are distinct $\calM$ and $\calN$ in $A$ such that there exists 
an $\Lstd$-elementary embedding $j:\calM\to\calN$.
\end{defn}

\begin{defn}\label{VCard}
A cardinal $\ka$ is a \emph{\Vopenka cardinal} if $\ka$ is inaccessible,
and for every $A\subset V_\ka$ of cardinality $\ka$ such that every element of
$A$ is an $\Lstd$-structure, $\VP(A)$ holds.
\end{defn}
\begin{defn}\label{VP}
\VPr is the axiom schema that states the following.
\begin{description}
\item[VP] For every (definable) proper class of $\Lstd$-structures, 
there are distinct members
$\calM$ and $\calN$ of the class
such that there is an $\Lstd$-elementary embedding 
$j:\calM\to\calN.$
\end{description}
\end{defn}
An alternative way to define \Vopenka cardinals and
\VPr is to remove the above distinctness requirements on
$\calM$ and $\calN$, and only require that the embedding $j$ be non-trivial;
we have chosen to follow Solovay, Reinhardt and Kanamori~\cite{SRK:SAIEE}
in this regard.
Since rigid graphs can be constructed of any cardinality 
using the axiom of choice (see~\cite{VPH:RRAS}),
\note{Qn: find a model of ZF and a set on which there is no rigid graph.}
and structures with two binary relations can be encoded into graphs in
a way that respects homomorphisms
(see for example~\cite{PuT:CATRGSC}), 
these formulations are equivalent under the
assumption of global choice.
In the definable class setting, global choice is equivalent to $V=\HOD$.
We shall avoid using this assumption, and show in Section~\ref{DVP} 
that in fact $V=\HOD$ may be forced while preserving \VPr
(but note that the form of \VPr with which we work is the
\emph{stronger} version of the two alternatives in the absence of $V=\HOD$).

We now focus on \Vopenka cardinals, 
leaving the details of the corresponding results for
\VPr to Section~\ref{DVP}.
It will be convenient to have at our disposal another equivalent but
more restricted characterisation
of \Vopenka cardinals

\begin{defn}
For any language $\calL$,
an \emph{ordinal $\calL$-structure} is an $\calL$-structure 
with domain an ordinal.
\end{defn}

\begin{lemma}\label{VPoLs}
For any inaccessible cardinal $\ka$, 
$\ka$ is \Vopenka if and
only if for every set $B\subset V_\ka$ of cardinality $\ka$
of ordinal $\Lstd$-structures,
there exist distinct $\calM$ and $\calN$ in $B$ such that there is an
elementary embedding $j:\calM\to \calN$.
\end{lemma}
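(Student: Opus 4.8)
The forward direction is immediate from the definitions: if $\ka$ is \Vopenka then $\VP(A)$ holds for \emph{every} $A\subset V_\ka$ of cardinality $\ka$ consisting of $\Lstd$-structures, and in particular for any such set $B$ all of whose members happen to be ordinal structures. So the content lies entirely in the converse, and my plan is to reduce an arbitrary family of structures to an isomorphic family of ordinal structures, to which the hypothesis can be applied.

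Assume the restricted property, and let $A\subset V_\ka$ be an arbitrary set of cardinality $\ka$ consisting of $\Lstd$-structures. Since $\ka$ is inaccessible we have $V_\ka=H_\ka$, so every $\calM\in A$ has domain $M$ with $|M|<\ka$. Using the axiom of choice I would fix, for each $\calM\in A$, a bijection $g_\calM\colon M\to|M|$ and transport the interpretations of $\epsilon$ and $R$ across it, obtaining an ordinal $\Lstd$-structure $\calM^*$ with domain the ordinal $|M|<\ka$ and with $g_\calM\colon\calM\to\calM^*$ an isomorphism. The assignment $\calM\mapsto\calM^*$ is then a well-defined function $f$ on $A$; let $B$ be the range of $f$. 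Each $\calM^*$ lies in $V_\ka$, so $B$ is a set of ordinal $\Lstd$-structures with $B\subset V_\ka$ and $|B|\leq\ka$.

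The argument now splits according to the size of $B$. If $|B|=\ka$, then $B$ is a legitimate instance of the restricted property, which therefore supplies distinct $\calM^*,\calN^*\in B$ together with an elementary embedding $j\colon\calM^*\to\calN^*$. Choosing preimages $\calM_0\in f^{-1}(\{\calM^*\})$ and $\calN_0\in f^{-1}(\{\calN^*\})$, which are automatically distinct since $\calM^*\neq\calN^*$, the composite $g_{\calN_0}^{-1}\circ j\circ g_{\calM_0}$ is an elementary embedding between the distinct members $\calM_0,\calN_0\in A$, as required. If instead $|B|<\ka=|A|$, then $f$ cannot be injective, so there are distinct $\calM_0,\calN_0\in A$ with $\calM_0^*=\calN_0^*$; in this case $g_{\calN_0}^{-1}\circ g_{\calM_0}$ is an isomorphism, hence an elementary embedding, between the distinct structures $\calM_0$ and $\calN_0$.

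The only genuinely delicate point is this second case. Passing to ordinal domains can only identify two members of $A$ when they are isomorphic, so any drop below $\ka$ in cardinality must arise from distinct $\calM_0,\calN_0\in A$ becoming equal as ordinal structures, and such a pair is already isomorphic; recognising that an isomorphism counts as an elementary embedding is precisely what trivialises this case and removes any need for the full strength of the hypothesis there. Beyond this, I would only need to verify the bookkeeping: that $V_\ka=H_\ka$ for inaccessible $\ka$ gives $|M|<\ka$, that the transported structures remain in $V_\ka$, and that elementary embeddings compose with isomorphisms to yield elementary embeddings.
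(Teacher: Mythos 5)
Your proof is correct and takes essentially the same approach as the paper's: reduce an arbitrary family $A$ to a family $B$ of ordinal $\Lstd$-structures via the axiom of choice and transfer the elementary embedding back along the isomorphisms. You are in fact somewhat more careful than the paper's two-sentence argument, which glosses over the possibility that $|B|<\ka$; your observation that a collision under the collapsing map already produces an isomorphism (hence an elementary embedding) between distinct members of $A$ disposes of that case correctly.
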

\begin{proof}
Any $A$ as in Definition~\ref{VCard} may clearly be converted to a corresponding
set $B$ of ordinal $\Lstd$-structures
using a choice function to choose for each element $\calM$ of $A$
a unique representatives from the set of 
ordinal $\Lstd$-structures isomorphic to $\calM$.
An elementary embedding between distinct members of $B$ then 
corresponds to an elementary embedding between distinct members of $A$.
\end{proof}

Note that a na\"{\i}ve recasting of this proof in terms of definable
classes would require $V=\HOD$.  
However, we shall see in Section~\ref{DVP} that this
is not necessary for the corresponding lemma to hold.

We will also need the following characterisation of 
\Vopenka cardinals, which is a slight refinement of one
from Kanamori~\cite{Kan:THI}.

\begin{defn}\label{extblBelkaA}
Let $A$ be a set and $\eta$ an ordinal less than $\ka$.  
A cardinal $\al<\eta$ is
\emph{$\eta$-extendible below $\ka$ for $A$} 
if there is some $\zeta<\ka$ and an elementary
embedding
$j:\langle V_\eta,\in,A\cap V_\eta\rangle\to
\langle V_\zeta,\in,A\cap V_\zeta\rangle$
with critical point $\al$ and $j(\al)>\eta$.
A cardinal $\al<\ka$ is \emph{extendible below $\ka$ for $A$}
if it is $\eta$-extendible below $\ka$ for $A$ for all $\eta$
strictly between $\al$ and $\ka$.
\end{defn}

\begin{prop}\label{VopCardExtA}
The following are equivalent for inaccessible cardinals $\ka$:
\begin{enumerate}
\item $\ka$ is a \Vopenka cardinal,
\item for every $A\subset V_\ka$, there is a cardinal $\al<\ka$ that 
is extendible below $\ka$ for $A$.
\end{enumerate}
\end{prop}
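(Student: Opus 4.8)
The plan is to prove the two implications separately, treating (2)$\Implies$(1) as a direct coding argument and (1)$\Implies$(2) by contradiction.

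For (2)$\Implies$(1), I would start from an arbitrary family witnessing the need for $\VP$, which by Lemma~\ref{VPoLs} may be taken to be a set $\{\calM_\xi : \xi<\ka\}$ of ordinal $\Lstd$-structures, enumerated injectively. Coding this sequence as the single set $A=\{\langle\xi,\calM_\xi\rangle : \xi<\ka\}\subseteq V_\ka$, I would apply~(2) to obtain a cardinal $\al<\ka$ extendible below $\ka$ for $A$. Choosing $\eta$ with $\al<\eta<\ka$ large enough that $\langle\al,\calM_\al\rangle\in V_\eta$, an embedding $j\colon\langle V_\eta,\in,A\cap V_\eta\rangle\to\langle V_\zeta,\in,A\cap V_\zeta\rangle$ with $\crit(j)=\al$ and $j(\al)>\eta$ sends $\langle\al,\calM_\al\rangle$ to $\langle j(\al),j(\calM_\al)\rangle\in A$, so that $j(\calM_\al)=\calM_{j(\al)}$ (note $j(\al)<\zeta<\ka$ is a genuine index). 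Since $V_\eta$ and $V_\zeta$ compute $\Lstd$-satisfaction correctly for the set structures they contain, $j$ restricted to the domain of $\calM_\al$ is an $\Lstd$-elementary embedding $\calM_\al\to\calM_{j(\al)}$, and $j(\al)>\eta>\al$ makes $\calM_\al$ and $\calM_{j(\al)}$ distinct. This is exactly $\VP(\{\calM_\xi : \xi<\ka\})$, so $\ka$ is \Vopenka.

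For (1)$\Implies$(2), suppose $\ka$ is \Vopenka but some $A\subseteq V_\ka$ admits no $\al<\ka$ extendible below $\ka$ for $A$. For each $\al<\ka$ let $w(\al)$ be the least $\eta\in(\al,\ka)$ with $\al$ not $\eta$-extendible below $\ka$ for $A$; a restriction argument shows that $\eta$-extendibility is downward closed in $\eta$, so $\al$ fails to be $\eta$-extendible for every $\eta\geq w(\al)$. Let $h$ be a normal function with $h(\al)\geq w(\al)$ for all $\al$ (possible since $\ka$ is regular), so $h$ is strictly increasing and continuous and $\al$ is never $h(\al)$-extendible; and let $D$ be the club of closure points of $h$, so that $\al<\ga\in D$ implies $h(\al)<\ga$. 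I would then apply $\VP$ to the family
$$\calM_\al=\langle V_{h(\al)},\in,A\cap V_{h(\al)},D\cap h(\al),h\restr\al,\al\rangle\qquad(\al\in D),$$
where the extra predicates are folded into $\Lstd$ as permitted by the coding remarks; this is a set of $\ka$ pairwise distinct structures in $V_\ka$.

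The crux is then to choose, among all pairs on which $\VP$ provides an embedding, one $j\colon\calM_\al\to\calM_\be$ with $\al$ minimal, and to show $\crit(j)=\al$. Writing $\mu=\crit(j)\leq\al$ and noting $\be=j(\al)>\al$ with $\be\in D$, hence $\be>h(\al)$: if $\mu<\al$ I would contradict minimality. Because $j$ preserves $D$ and $h\restr\al$, restricting $j$ to $V_{h(\rho)}$ for a \emph{moved} point $\rho\in D\cap[\mu,\al)$ yields an embedding $\calM_\rho\to\calM_{j(\rho)}$ of smaller domain-index; and if no such moved $\rho$ exists then either $\al$ is a limit of $D$ (forcing $j(\al)=\sup j[D\cap\al]=\al$, absurd) or $\al$ has an immediate $D$-predecessor $\de$, which is definable from the constant and is fixed, whence elementarity makes $\be=j(\al)$ the least $D$-point above $\de$, i.e.\ $\be=\al$, again absurd. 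With $\crit(j)=\al$ established, $j\colon\langle V_{h(\al)},\in,A\cap V_{h(\al)}\rangle\to\langle V_{h(\be)},\in,A\cap V_{h(\be)}\rangle$ has critical point $\al$ and $j(\al)=\be>h(\al)$, so $\al$ \emph{is} $h(\al)$-extendible below $\ka$ for $A$, contradicting the choice of $h$.

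I expect the hard part to be exactly this control of the critical point. A bare family of the $\langle V_\ga,\in,A\cap V_\ga\rangle$ admits ``short'' embeddings, in which $j(\crit)$ lands below the domain level, and such embeddings satisfy $\VP$ without witnessing any extendibility at all; so the marking of $D$ and $h\restr\al$, together with the minimal-index choice, are precisely what force the embedding handed to us by $\VP$ to be ``long'' enough to refute non-extendibility.
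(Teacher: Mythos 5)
Your direction (2)$\Implies$(1) is fine, and your overall strategy for (1)$\Implies$(2) --- attach to each $\al$ the least level $w(\al)$ at which extendibility fails, build structures on the $V_{h(\al)}$ with markers designed to pin down the critical point, and take $\al$ minimal --- is the right one (it is essentially the Solovay--Reinhardt--Kanamori argument that the paper itself does not reproduce but simply cites, as Exercise~24.19 of Kanamori). However, the step you yourself identify as the crux contains a genuine gap. In the case where $\al$ is a limit point of $D$ and every element of $D\cap\al$ is fixed by $j$, you conclude that $j(\al)=\sup j[D\cap\al]=\al$. That equality requires $j$ to be continuous at $\al$, and elementary embeddings between set structures are in general discontinuous at ordinals of cofinality $\crit(j)$: from $j[D\cap\al]=D\cap\al$ being cofinal in $\al$ you only get $j(\al)\geq\al$, and elementarity only tells you that $\be=j(\al)$ is again a limit point of $D$, which is perfectly consistent with $\be>\al$. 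So this case is not ruled out, and with it the claim $\crit(j)=\al$ collapses. The repair is cheap: run the whole argument with the index set restricted to the non-limit points of $D$ (the minimum together with the successor points of the club), which is still a set of size $\ka$; then the troublesome case never arises and your immediate-$D$-predecessor case (which is correct) always applies.

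A secondary slip: a \emph{normal} $h$ with $h(\al)\geq w(\al)>\al$ for all $\al<\ka$ cannot exist, since a normal function on a regular $\ka$ has a club of fixed points, at which $h(\al)=\al<w(\al)$ (and at which your structure $\langle V_{h(\al)},\dots,\al\rangle$ would not even contain its own distinguished constant $\al$). Nothing in your argument uses continuity of $h$: take $h$ merely strictly increasing with $h(\al)>w(\al)$ for all $\al$, and let $D$ be the (still club) set of closure points of this $h$. Everything else --- in particular the implication $\al<\ga\in D\Rightarrow h(\al)<\ga$, and the final contradiction with the failure of $h(\al)$-extendibility via the downward closure in $\eta$ that you correctly establish --- then goes through.
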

\begin{proof}
The proof of 
Exercise~24.19 of Kanamori~\cite{Kan:THI}
also proves this ``below $\ka$'' refinement.
\end{proof}

We mostly follow the notational conventions of Kunen~\cite{Kun:ST}
for forcing concepts; in particular, $q\leq p$ shall mean that 
$q$ is a stronger condition than $p$, 
and
for any set $x$ in the ground model, we shall denote by $\check x$ the 
canonical name for $x$ in the extension, 
$\{\langle\check y,\mathds{1}
\rangle\st y\in x\}$.

\section{$\ka^+$-distributive forcing and $\square$}

The following Proposition is immediate from the definition of 
\Vopenka cardinals.

\begin{prop}\label{distrib}
If $\ka$ is a \Vopenka cardinal 
and $\P$ is a $\ka^+$-distrib\-utive partial order
(that is, forcing with $\P$ adds no new subsets of $\ka$),
then $\ka$ remains a \Vopenka cardinal after forcing with $\P$.
\hfill\qedsymbol
\end{prop}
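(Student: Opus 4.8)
The plan is to work directly from Definition~\ref{VCard}, exploiting the fact that $\ka^+$-distributive forcing leaves the relevant initial segment of the universe untouched. Let $G$ be $\P$-generic over $V$. The first step is to record the standard consequences of distributivity: since $\P$ adds no new subsets of $\ka$, an induction on rank shows that $V_\ka^{V[G]}=V_\ka^V$ and $\Power(V_\ka)^{V[G]}=\Power(V_\ka)^V$, each element of $V_\ka$ and each subset of $V_\ka$ being coded by a subset of some cardinal below $\ka$, hence by a subset of $\ka$. The same coding shows that cardinals up to and including $\ka$ are preserved and that $\ka$ remains a strong limit, so $\ka$ is again inaccessible in $V[G]$.

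Next I would fix, in $V[G]$, an arbitrary $A\subset V_\ka^{V[G]}$ of cardinality $\ka$ all of whose elements are $\Lstd$-structures, and argue that $\VP(A)$ holds there. Since $A$ is a subset of $V_\ka$, the previous paragraph gives $A\in V$ with the same elements, and since a bijection between $A$ and $\ka$ is again coded by a subset of $\ka$, we have $|A|^V=\ka$ as well. Thus $A$ is a legitimate instance of the defining condition for $\ka$ being \Vopenka back in $V$, so by hypothesis $\VP(A)$ holds in $V$: there are distinct $\calM,\calN\in A$ and an $\Lstd$-elementary embedding $j\colon\calM\to\calN$ lying in $V$.

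It remains to transport this witness up to $V[G]$. Because $\calM$ and $\calN$ lie in $V_\ka$ and $j$ is a function between them, $j\in V_\ka^V=V_\ka^{V[G]}$, so $j\in V[G]$, and $\calM,\calN$ remain distinct members of $A$. The only substantive point is that $j$ is still \emph{elementary} in $V[G]$; but this is immediate from absoluteness. Satisfaction in a set-sized structure is absolute, and the assertion that $j$ is an $\Lstd$-elementary embedding unpacks as the statement that for every formula $\phi$ and every tuple $\bar a$ from $M$ one has $\calM\sat\phi(\bar a)$ if and only if $\calN\sat\phi(j(\bar a))$, a statement all of whose parameters ($j$, $\calM$, $\calN$, and the relevant satisfaction relations) lie within the common $V_\ka$. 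Hence $j$ witnesses $\VP(A)$ in $V[G]$.

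Since $A$ was arbitrary and $\ka$ remains inaccessible, $\ka$ is a \Vopenka cardinal in $V[G]$. I expect no genuine obstacle here: the content is entirely in the opening observation that distributivity freezes $V_\ka$ and $\Power(V_\ka)$, after which the witnessing embedding furnished in $V$ survives verbatim by absoluteness. One could equally run the argument through the extendibility characterisation of Proposition~\ref{VopCardExtA}, observing that any $A\subset V_\ka$ appearing in $V[G]$ already lies in $V$ and that the witnessing extendibility embeddings, being elements of $V_\ka$, persist unchanged; but the direct route above is the most economical, which is why the statement is described as immediate from the definition.
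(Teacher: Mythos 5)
Your proposal is correct and matches the paper's approach: the paper simply declares the proposition ``immediate from the definition,'' and the details you supply --- that $\ka^+$-distributivity freezes $V_\ka$ and $\Power(V_\ka)$, so every instance $A$ of Definition~\ref{VCard} in $V[G]$ already lies in $V$ together with a witnessing embedding, which remains elementary by absoluteness of satisfaction --- are exactly the intended ones. No gaps.
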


Whilst Proposition~\ref{distrib} is entirely trivial, it can be used to 
make the following interesting observation.
Recall that for any uncountable cardinal $\ka$, a \emph{$\square_\ka$-sequence}
is a sequence $\langle C_\al\st\al\in\textrm{lim}\cap\ka^+\rangle$ such that
for all $\al\in\textrm{lim}\cap\ka^+$,
\begin{itemize}
\item $C_\al$ is a club in $\al$,
\item $\ot(C_\al)\leq\ka$,
\item if $\be\in\lim(C_\al)$ then $C_\be=C_\al\cap\be$.
\end{itemize}
The statement that there exists a $\square_\ka$-sequence is denoted 
simply by
$\square_\ka$.

\begin{coroll}
It is relatively consistent with ``$\ka$ is a \Vopenka cardinal'' 
that $\square_\ka$ holds.
\end{coroll}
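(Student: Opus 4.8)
The plan is to exploit Proposition~\ref{distrib}: starting from a model in which $\ka$ is a \Vopenka cardinal, I would force with the standard partial order $\P$ that adds a $\square_\ka$-sequence by initial segments, and then argue that $\P$ adds no new subsets of $\ka$, so that $\ka$ remains \Vopenka in the extension while $\square_\ka$ is made to hold. Concretely, a condition of $\P$ is a sequence $p=\langle C_\al\st\al\in\textrm{lim}\cap(\ga_p+1)\rangle$ satisfying the three defining clauses of a $\square_\ka$-sequence up to some limit ordinal $\ga_p<\ka^+$, ordered by end-extension: $q\leq p$ iff $\ga_q\geq\ga_p$ and $C^q_\al=C^p_\al$ for every limit $\al\leq\ga_p$. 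If $G$ is $\P$-generic over $V$, then $\bigcup G$ is a $\square_\ka$-sequence, so $\square_\ka$ holds in $V[G]$; everything therefore reduces to showing that $\P$ is $\ka^+$-distributive.

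To apply Proposition~\ref{distrib} I would verify that $\P$ adds no new subsets of $\ka$ by showing it is $\ka^+$-distributive. The natural attempt is to take lower bounds of decreasing sequences directly. Given a descending sequence $\langle p_\xi\st\xi<\theta\rangle$ with $\theta<\ka^+$, the union $\bigcup_\xi p_\xi$ is a coherent partial square sequence defined on all limit ordinals \emph{below} the top ordinal $\ga=\sup_\xi\ga_{p_\xi}$, but not at $\ga$ itself; to obtain a condition one must adjoin a club $C_\ga$ in $\ga$ of order type at most $\ka$ cohering with every $C_\be$ already chosen for $\be\in\lim(C_\ga)$.

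The main obstacle is precisely that such a coherent $C_\ga$ need not exist for an \emph{arbitrary} descending sequence, so $\P$ is genuinely not $\ka^+$-closed and one cannot simply take unions. The standard remedy, which I would follow, is to show instead that $\P$ is $\ka^+$-strategically closed: the extending player maintains the invariant that, at each of its moves at a limit stage, the new top club is the closure of the set of previously played top ordinals. Under this strategy the clubs $C_{\ga_{p_\xi}}$ cohere by construction, so at the final limit the union of these choices is itself a coherent club at $\ga$, and since it is indexed by at most $\ka$ earlier stages its order type is at most $\ka$. This yields a lower bound at every relevant limit and establishes strategic closure, which is the well-known behaviour of square forcing (see Cummings~\cite{Cum:IFE}).

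Strategic $\ka^+$-closure implies $\ka^+$-distributivity, so $\P$ adds no new subsets of $\ka$. By Proposition~\ref{distrib}, $\ka$ is still a \Vopenka cardinal in $V[G]$, and as noted $\square_\ka$ holds there; hence ``$\ka$ is a \Vopenka cardinal'' is relatively consistent with $\square_\ka$, as required. The only delicate point is the strategic-closure verification just sketched; the remaining steps are immediate from the cited proposition.
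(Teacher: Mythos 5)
Your proposal is correct and follows exactly the paper's route: force with Jensen's partial-square poset, observe that it is $<\ka^+$-strategically closed (hence $\ka^+$-distributive), and apply Proposition~\ref{distrib}. The paper simply cites Cummings for the strategic-closure argument that you sketch in more detail.
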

\begin{proof}
The usual forcing (due to Jensen) to make $\square_\ka$ hold, 
in which the conditions are
partial $\square_\ka$ sequences, is $<\ka^+$-strategically closed
(see for example Cummings~\cite[Sections~5 and 6]{Cum:IFE}),
and in particular is $\ka^+$-distributive.
\end{proof}

This contrasts with the result of Jensen (see Friedman~\cite{SDF:LCL})
that if $\ka$ is subcompact then
$\square_\ka$ fails: subcompact cardinals are consistency-wise
weaker than \Vopenka cardinals.
For further discussion of $\square_\ka$ and its failure 
for subcompact and related cardinals $\ka$, see
Cummings and Schimmerling~\cite[Section~6]{CuS:ISq}.

Proposition~\ref{distrib} will also be important for showing that
\Vopenka cardinals $\kappa$ are preserved by appropriate forcing iterations that
go beyond $\kappa$.

\begin{coroll}\label{2StepIt}
If $\ka$ is a cardinals and $\P*\dot\Q$ is a forcing iteration such that
\[
\forces_\P\check\kappa\textrm{ is a \Vopenka cardinal }\land
\dot\Q\textrm{ is $\check\kappa^+$-distributive }
\]
then $\forces_{\P*\dot\Q}\check\kappa\textrm{ is a \Vopenka cardinal }$.
\hfill\qedsymbol
\end{coroll}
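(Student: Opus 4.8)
The plan is to reduce the two-step iteration to an instance of the trivial preservation result recorded in Proposition~\ref{distrib}, working inside the intermediate extension $V[G]$ obtained by forcing with $\P$. Concretely, suppose $G$ is $\P$-generic over $V$. The two hypotheses on the iteration are precisely that, in $V[G]$, the cardinal $\ka$ is \Vopenka and the partial order $\Q=\dot\Q^G$ is $\ka^+$-distributive. But these are exactly the hypotheses of Proposition~\ref{distrib}, applied in the model $V[G]$ rather than in the ground model.

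First I would invoke the standard fact that forcing with the two-step iteration $\P*\dot\Q$ is equivalent to first forcing with $\P$ to obtain $G$, and then forcing over $V[G]$ with $\Q=\dot\Q^G$; see for instance Kunen~\cite{Kun:ST}. Thus any $\P*\dot\Q$-generic extension of $V$ has the form $V[G][H]$, where $G$ is $\P$-generic over $V$ and $H$ is $\Q$-generic over $V[G]$. By the first forcing hypothesis, $\ka$ is a \Vopenka cardinal in $V[G]$; by the second, $\Q$ is $\ka^+$-distributive in $V[G]$.

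Next I would apply Proposition~\ref{distrib} within $V[G]$. Since that proposition asserts that $\ka^+$-distributive forcing preserves \Vopenka cardinals, and its statement is absolute between set-theoretic universes in the sense that it holds in any model of \ZFC, it holds in particular in $V[G]$. Hence $\ka$ remains a \Vopenka cardinal in $V[G][H]$. As $H$ was an arbitrary $\Q$-generic over $V[G]$ and $G$ an arbitrary $\P$-generic over $V$, every $\P*\dot\Q$-generic extension preserves the \Vopenka-ness of $\ka$, which is what the forcing relation $\forces_{\P*\dot\Q}$ asserts.

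I do not expect any genuine obstacle here, as the corollary is a routine relativisation: the only point requiring a moment's care is the interpretation of the hypotheses as statements forced by $\P$, which translate directly into the requisite facts holding in the generic extension $V[G]$. One should merely be careful that the distributivity hypothesis is formulated in $V[G]$ (about the realised poset $\Q$, not the name $\dot\Q$), which is exactly what $\forces_\P \dot\Q\textrm{ is $\check\ka^+$-distributive}$ guarantees.
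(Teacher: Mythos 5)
Your proof is correct and is exactly the argument the paper intends: the corollary is stated with no written proof (it is regarded as immediate from Proposition~\ref{distrib}), and the intended justification is precisely the standard factorisation of $\P*\dot\Q$ into forcing with $\P$ followed by forcing with $\Q=\dot\Q^G$ over $V[G]$, then applying Proposition~\ref{distrib} inside $V[G]$. Your added care about interpreting the distributivity hypothesis in the intermediate model is appropriate and matches the paper's reading.
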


\section{Small forcing}

It has become part of the set-theoretic folklore that small forcing preserves
most large cardinals, where by ``small'' we mean 
of cardinality less than the large cardinal in question.
This stems from the original result of L\'evy and Solovay \cite{LvS:MCCH}
that small forcing preserves measurable cardinals, and the fact that
most strong large cardinal properties can be expressed similarly to measurable
cardinals with a witnessing elementary embedding.
Whilst the definition of \Vopenka cardinals does involve elementary
embeddings, 
it is not immediately clear that the usual argument will extend to this case.
In this section we shall confirm that these large cardinal properties
are preserved by small forcing,
and in the process set the scene for later sections.
Of course, this could be considered to be a special case of our main theorem,
but there are tricks we can use to simplify the argument significantly 
in this context for the reader not interested in the full reverse Easton
iteration result.

We need the following well-known, basic large cardinal preservation result.
\begin{lemma}\label{InaccPres}
If $\ka$ is an inaccessible cardinal in $V$, $\P$ is a partial order of
cardinality less than $\ka$, and $G$ is $\P$-generic over $V$,
then $\ka$ is inaccessible in $V[G]$.
\end{lemma}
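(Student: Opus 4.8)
The plan is to verify that inaccessibility is preserved by small forcing, using the standard chain-condition and cardinality arguments. First I would observe that since $|\P|<\ka$, the partial order $\P$ has the $\ka$-chain condition (indeed, any antichain has size at most $|\P|<\ka$), and consequently forcing with $\P$ preserves all cardinals and cofinalities $\geq\ka$. In particular $\ka$ remains a cardinal in $V[G]$, and it remains regular because any cofinal map into $\ka$ with domain of size less than $\ka$ in $V[G]$ would, by the $\ka$-c.c., be covered by a ground-model function with small range, contradicting the regularity of $\ka$ in $V$.

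Next I would establish that $\ka$ remains a strong limit in $V[G]$. Fix a cardinal $\la<\ka$ in $V[G]$; by the preservation of cardinals below $\ka$ (again from $|\P|<\ka$, so in fact all cardinals $\le\ka$ are preserved), $\la$ is also a cardinal in $V$. The key estimate is to bound $(2^\la)^{V[G]}$. Every subset of $\la$ in $V[G]$ has a $\P$-name, and by the $\ka$-c.c.\ (or more simply by counting nice names) the number of such names in $V$ is at most $((|\P|^\la)^{V})^{+}$-bounded below $\ka$; concretely, a nice name for a subset of $\la$ is determined by a function from $\la$ into the set of antichains of $\P$, of which there are at most $(2^{|\P|})^\la\leq (2^{<\ka})^{<\ka}<\ka$ many in $V$, using that $\ka$ is inaccessible in $V$. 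Hence $(2^\la)^{V[G]}<\ka$, so $\ka$ is a strong limit in $V[G]$.

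Combining regularity and the strong limit property gives that $\ka$ is inaccessible in $V[G]$, completing the argument. The main obstacle, such as it is, is the careful bookkeeping in the strong-limit step: one must ensure the count of nice names is carried out in $V$ and stays strictly below $\ka$, which is exactly where the inaccessibility of $\ka$ in $V$ (giving $2^{<\ka}=\ka$ and closure of $\ka$ under the relevant cardinal arithmetic) is used. Everything else is a routine application of the small-forcing chain condition, and since the lemma is labelled well-known, I would present these steps concisely rather than in full detail.
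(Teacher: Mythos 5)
Your argument is correct and is essentially the paper's: the paper simply notes that $\P$ is $|\P|^+$-cc and hence preserves cofinalities above $|\P|$ and the continuum function $\la\mapsto 2^\la$ for $\la\geq|\P|$, which is the same chain-condition-plus-nice-name-counting reasoning you spell out. One small slip worth fixing: the intermediate bound $(2^{<\ka})^{<\ka}<\ka$ is false as written (that quantity equals $\ka$ when $\ka$ is inaccessible); what you need, and what is true, is that $(2^{|\P|})^{\la}<\ka$ for each fixed $\la<\ka$, by inaccessibility of $\ka$ in $V$.
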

\begin{proof}
For any partial order $\P$,
$\P$ is $|\P|^+$-cc, and hence preserves
cofinalities greater than $|\P|$ and the continuum function 
$\la\mapsto 2^\la$ for $\la\geq|\P|$.
\end{proof}

A key point in the proof of Theorem~\ref{smallpres}, 
and indeed our later theorems,
is that the embeddings in which we are interested for 
\Vopenka cardinals need not
respect the given subset $A$ of $V_\ka$, but rather be elementary between
two elements of $A$;
and yet no particular element is especially important, as 
there will be many embeddings witnessing \VPr for each subset $A$ of $V_\ka$.
This means that we can replace a name $\dot A$ for a subset of $V_\ka$
by one consisting of particularly nice names for its elements, giving
us much more control.


\begin{defn}
Let $\P$ be a forcing partial order and $\calL$ a relational language.
A \emph{nice $\P$-name for an ordinal $\calL$-structure} is a $\P$-name
$\sigma$ of the following form:
\begin{enumerate}
\item $\sigma$ is the canonical name for 
an ordinal $\calL$-structure
\mbox{$\langle\gamma_\sigma,R^\sigma\st$}\mbox{$R\in\calL\rangle$} in $V[G]$
with components named by $\check\ga_\sigma$ and $\dot R^\sigma$ for
$R\in\calL$,
\item if $R\in\calL$ is $n$-ary, 
the name $\dot R^\sigma$ for $R^\sigma$ is of the form
\[
\bigcup_{(\be_1,\ldots,\be_n)\in \ga_\sigma^n}
\{\check{(\be_1,\ldots,\be_n)}\}\times A^R_{(\be_1,\ldots,\be_n)}
\]
where
$A^R_{(\be_1,\ldots,\be_n)}$ is an antichain in $\P$
for each $(\be_1,\ldots,\be_n)\in\ga_\sigma^n$.
\end{enumerate}
\end{defn}

\begin{lemma}\label{nicenames}
Let $\calL$ be a language, $\P$ a partially ordered set, 
$\dot A$ a $\P$-name, and $p$ an element of
$\P$ such that 
\[
p\forces\forall a\in\dot A(a\textnormal{ is an ordinal $\calL$-structure}).  
\]
Then there is a $\P$-name $\dot B$ such that $p\forces\dot A=\dot B$
and for every element
$\langle\sigma,q\rangle$ of $\dot B$, 
$q\leq p$ and $\sigma$ is a nice $\P$-name for 
an ordinal $\Lstd$-structure.
\end{lemma}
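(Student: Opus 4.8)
The plan is to build $\dot B$ by replacing, below each condition that decides the relevant data, each name coming from $\dot A$ with a canonically constructed nice name, and to throw in enough such nice names that nothing in $\dot A$ is missed in any generic extension.

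First I would carry out the basic single-element construction. Suppose $\langle\tau,r\rangle\in\dot A$, and suppose $q\leq p$ with $q\leq r$ is a condition that decides the domain of $\tau$ to be a fixed ordinal $\ga$; note $q\forces\tau\in\dot A$, since $q\leq r$ and $r\forces\tau\in\dot A$. For each $n$-ary $R\in\calL$ and each tuple $(\be_1,\ldots,\be_n)\in\ga^n$, the conditions below $q$ deciding the statement ``$(\be_1,\ldots,\be_n)\in R^\tau$'' are dense below $q$, so I may choose a maximal antichain $D^R_{(\be_1,\ldots,\be_n)}$ below $q$ of such deciding conditions and set
\[
A^R_{(\be_1,\ldots,\be_n)}=\{s\in D^R_{(\be_1,\ldots,\be_n)}\st s\forces(\be_1,\ldots,\be_n)\in R^\tau\}.
\]
Letting $\sigma_{q,\tau}$ be the nice name with domain $\check\ga$ and relations
\[
\dot R^{\sigma_{q,\tau}}=\bigcup_{(\be_1,\ldots,\be_n)\in\ga^n}\{\check{(\be_1,\ldots,\be_n)}\}\times A^R_{(\be_1,\ldots,\be_n)},
\]
a routine genericity argument --- using that any generic containing $q$ meets each $D^R_{(\be_1,\ldots,\be_n)}$ in exactly one condition, which decides the relevant membership --- shows that $q\forces\sigma_{q,\tau}=\tau$, and hence $q\forces\sigma_{q,\tau}\in\dot A$.

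Then I would assemble $\dot B$ from all these pieces, taking $\dot B$ to be the set of all pairs $\langle\sigma_{q,\tau},q\rangle$ such that $\langle\tau,r\rangle\in\dot A$ for some $r$, $q\leq p$, $q\leq r$, and $q$ decides the domain of $\tau$. The antichain choices above, made using the axiom of choice, render this a set, and by construction each such pair has $q\leq p$ with $\sigma_{q,\tau}$ a nice $\P$-name for an ordinal $\calL$-structure, as required. The inclusion $p\forces\dot B\subseteq\dot A$ is then immediate, since $q\forces\sigma_{q,\tau}\in\dot A$ holds for every pair placed into $\dot B$.

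The main work is the reverse inclusion $p\forces\dot A\subseteq\dot B$, and this is where the richness of the collection matters. Given a generic $G\ni p$ and $a\in\dot A^G$, I would pick $\langle\tau,r\rangle\in\dot A$ with $r\in G$ and $\tau^G=a$; since $p$ forces $a$ to be an ordinal $\calL$-structure, $a$ has a genuine domain ordinal $\ga$. As $p,r\in G$ there is $q_0\in G$ below both, and by genericity some condition of $G$ decides the domain of $\tau$, so I can find a single $q\in G$ with $q\leq p$ and $q\leq r$ that decides that domain (necessarily to its true value $\ga$). That $q$ is then one of the conditions used in forming $\dot B$, so $\langle\sigma_{q,\tau},q\rangle\in\dot B$ with $q\in G$, whence $a=\tau^G=\sigma_{q,\tau}^G\in\dot B^G$. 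The one point demanding care is the confirmation that for \emph{every} way an element can enter $\dot A$ in \emph{every} generic we have placed a corresponding nice name into $\dot B$; this is guaranteed precisely because the assembly ranged over all admissible $q$ deciding the domain rather than committing to a single one. The verification that $\sigma_{q,\tau}$ is genuinely the canonical name for the full $\calL$-structure, domain together with all its relations, is routine.
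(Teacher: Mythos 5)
Your proposal is correct and follows essentially the same route as the paper: the same sets of admissible conditions $q\leq p,r$ deciding the domain, the same per-tuple maximal antichain construction of the relation names, and the same assembly of $\dot B$ over all such $q$. The only (immaterial) difference is that you take a maximal antichain of conditions \emph{deciding} membership and restrict to the positive ones, where the paper directly takes an antichain of positive conditions maximal with that property.
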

\begin{proof}
The argument is a fairly typical ``nice-name'' construction.  
For each $\langle\dot a,r\rangle\in\dot A$,
let 
\[
Q_{\dot a,r}=\{q\in\P\st q\leq p\land q\leq r\land
\exists\ga(q\forces\dom(\dot a)=\ga)\}.
\]
Suppose we have $q\in Q_{\dot a,r}$ and $\ga$ such that 
$q\forces\dom(\dot a)=\ga$. 
For each $n$-ary relation symbol $R\in\calL$
and each $(\be_1,\ldots,\be_n)\in\ga^n$, 
we can choose
an antichain $A_{(\be_1,\ldots,\be_n)}$
below $q$ such that for each $s\in A_{(\be_1,\ldots,\be_n)}$ we have
\mbox{$s\forces\check{(\be_1,\ldots,\be_n)}\in R^{\dot a}$,}
and such that $A_{(\be_1,\ldots,\be_n)}$
is maximal with this property
(here $R^{\dot a}$ denotes the canonical name for the interpretation of the 
relation $R$ in the structure named by $\dot a$). 
Then setting
$\dot R^{q,\dot a}_{(\be_1,\ldots,\be_n)}=
\{(\be_1,\ldots,\be_n)\}\times A_{(\be_1,\ldots,\be_n)}$,
we have by standard arguments that
\[
q\forces\check{(\be_1,\ldots,\be_n)}\in R^{\dot a}\iff
\check{(\be_1,\ldots,\be_n)}\in\dot R^{q,\dot a}_{(\be_1,\ldots,\be_n)}.
\]
Taking 
$\dot R^{\sigma_{\dot a,r,q}}=
\bigcup_{(\be_1,\ldots,\be_n)\in\ga^n}\dot R^{q,\dot a}_{(\be_1,\ldots,\be_n)}$,
and $\ga_{\sigma_{\dot a,r,q}}=\gamma$,
we obtain a nice $\P$-name 
$\sigma_{\dot a,r,q}$
for an ordinal $\calL$-structure 
such that $q\forces\sigma_{\dot a,r,q}=\dot a$.
Taking $\dot B=\bigcup_{\langle\dot a,r\rangle\in\dot A}
\{\langle\sigma_{\dot a,r,q},q\rangle\st q\in Q_{\dot a,r}\}$,
we have $p\forces\dot A=\dot B$, as required.
\end{proof}

\begin{thm}\label{smallpres}
Suppose $\ka$ is a \Vopenka cardinal in $V$, 
$\P$ is a partially ordered set of
cardinality less than $\ka$, and $G$ is $\P$-generic over $V$.  
Then $\ka$ is \Vopenka in $V[G]$.
\end{thm}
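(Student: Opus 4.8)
The plan is to reduce the problem in $V[G]$ to a single application of $\ka$'s \Vopenka-ness back in $V$, transferring one witnessing embedding forward through the forcing. First, by Lemma~\ref{InaccPres} $\ka$ remains inaccessible in $V[G]$, so by Lemma~\ref{VPoLs} (applied in $V[G]$) it suffices to take an arbitrary $B\subset V_\ka$ in $V[G]$ of cardinality $\ka$ consisting of ordinal $\Lstd$-structures and produce an $\Lstd$-elementary embedding between two distinct members of $B$. Fix a name $\dot B$ and a condition $p\in G$ forcing that $\dot B$ has this form; by Lemma~\ref{nicenames} I may assume $\dot B$ is built from nice names for ordinal $\Lstd$-structures, all of whose conditions lie below $p$.

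The heart of the argument is a coding that turns each such nice name into an ordinal structure in $V$ in such a way that an embedding in $V$ pushes forward to an embedding in $V[G]$. Since $|\P|<\ka$ and $\ka$ is a strong limit, $\P$ has fewer than $\ka$ subsets, so for a nice name $\sigma$ for an ordinal structure on a domain $\ga_\sigma<\ka$, each forcing value $\{q\in\P\st q\forces\sigma\sat\phi(\check{\vec\be})\}$ (for $\phi$ an $\Lstd$-formula and $\vec\be\in\ga_\sigma^{<\omega}$) ranges over fewer than $\ka$ possibilities. I encode $\sigma$ as a structure $\hat\sigma$ on domain $\ga_\sigma$ in a fixed language $\calL^+\in V$ having, for each $\Lstd$-formula $\phi$ and each subset $b\subseteq\P$, a relation symbol whose interpretation in $\hat\sigma$ is $\{\vec\be\st\{q\st q\forces\sigma\sat\phi(\check{\vec\be})\}=b\}$; as $|\calL^+|\le\aleph_0\cdot 2^{|\P|}<\ka$, the remark encoding arbitrary languages into $\Lstd$ lets me apply $\ka$'s \Vopenka-ness (via Lemma~\ref{VPoLs}) to families of such $\hat\sigma$. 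The point of recording the forcing values of \emph{all} formulas, not merely the atomic relations, is that an $\calL^+$-elementary embedding $j:\hat\sigma\to\hat\tau$ then preserves every forcing value, so the induced map on domains satisfies $\sigma_G\sat\phi(\vec\be)\iff\tau_G\sat\phi(j\vec\be)$ in $V[G]$; that is, it is genuinely $\Lstd$-\emph{elementary} between $\sigma_G$ and $\tau_G$.

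It remains to assemble a family of size $\ka$ in $V$ whose members, for the actual generic $G$, name distinct elements of $B$. I would take an enumeration name $\dot e:\ka\to\dot B$ below $p$; for each $\xi<\ka$ a maximal antichain below $p$ deciding the domain of $\dot e(\xi)$ yields, via the nice-name construction, a nice name $\sigma_\xi^w$ with $w\forces\sigma_\xi^w=\dot e(\check\xi)$ for each $w$ in that antichain. Sorting the codings $\hat\sigma_\xi^w$ according to the deciding condition $w$ gives fewer than $\ka$ classes, so since $\ka$ is regular a single $w^*\in G$ indexes a class of size $\ka$; because $w^*\in G$, every coding in this class names an element of $B$, and distinct values of $\xi$ name distinct elements since $\dot e$ is injective. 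Applying $\ka$'s \Vopenka-ness in $V$ to this size-$\ka$ family produces an $\calL^+$-elementary embedding between two of its members, which by the previous paragraph transfers to an $\Lstd$-elementary embedding between two distinct elements of $B$ in $V[G]$, as required.

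The step I expect to be the main obstacle is exactly this last coordination: transferring full elementarity through the forcing \emph{while} guaranteeing that the two structures produced are distinct members of $B$, rather than merely distinct names or the same element named twice. This is what forces both the use of forcing values of all formulas in the coding and the pigeonhole that extracts a deciding condition actually lying in $G$. Alternatively, one could bypass the coding entirely and argue via Proposition~\ref{VopCardExtA}: choose extendibility-below-$\ka$ data in $V$ for a set coding both $\dot A$ and the ordinals up to $\rank(\P)$, so that the critical point exceeds $\rank(\P)$, and lift the resulting embeddings by Silver's criterion in the usual L\'evy--Solovay manner; I favour the nice-name route here because it is the one that will generalise to the reverse Easton iteration.
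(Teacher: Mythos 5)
Your argument is correct, and while it shares the paper's governing idea --- don't lift a fixed embedding, but instead manufacture a $V$-family of structures from nice names and let $\ka$'s \Vopenka-ness in $V$ hand you \emph{some} embedding that survives into $V[G]$ --- the mechanism by which elementarity is transferred is genuinely different. The paper wraps each pair $\langle\sigma,q\rangle$ into a structure $\langle H_\la,\in,\langle\sigma,q,R_\P\rangle\rangle$ and then needs two further ingredients: the $\Sigma_1$-correctness of $H_\la$ for the (definable) forcing relation, so that ``$q\forces\sigma\sat\phi$'' is absolute between $\calM$, $\calN$ and $V$, and a rigid binary relation $R_\P$ on $|\P|$ to force $j(q)=q$ for all $q\in\P$, which simultaneously makes the forcing values match up and guarantees $q_\calM=q_\calN$ is a single condition below $r$ (the whole thing being run as a density argument below $p$). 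You instead bake the complete forcing-theoretic type of each name --- the forcing value of every $\Lstd$-formula at every tuple --- into the relations of a structure on the ordinal domain itself, so that an elementary embedding between two such structures transfers to an elementary embedding of the named structures with no correctness or rigidity argument at all; the price is a language of size $2^{|\P|}<\ka$ rather than $\Lstd$, and the membership/distinctness bookkeeping is done by a pigeonhole on antichain selectors to find a single deciding condition $w^*\in G$, rather than by rigidity. Both work; yours is arguably cleaner at the transfer step, while the paper's $H_\la$-plus-rigid-relation device is what scales to the iteration theorem (where the relevant names are no longer small and one really does lift an extendibility embedding). The one detail you lean on more heavily than the paper does is the reduction of a $<\ka$-sized language to the single binary relation of $\Lstd$ uniformly across the family and compatibly with restricting the resulting embedding back to the original ordinal domains; this is standard and the paper itself invokes the same encoding, but in your version it carries real weight and deserves a sentence.
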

\begin{proof}
By replacing $\P$ with an isomorphic partial order if necessary, we may 
assume for convenience that the underlying set of $\P$ is the cardinal
$|\P|<\ka$.
We know 
from Lemma~\ref{InaccPres} that $\ka$ is inaccessible in $V[G]$, 
so it suffices to show that
for any set 
$A$ of cardinality $\ka$ of ordinal $\Lstd$-structures in 
$(V_\ka)^{V[G]}$, 
there are distinct elements $\calM$ and $\calN$ of $A$ with 
an elementary embedding $j:\calM\to\calN$ in $V[G]$.
Let $\dot A$ be a $\P$-name for $A$, and let $p\in G$ be such that
\[
p\forces(\dot A\subset V_{\check\kappa})\land(|\dot A|=\check\kappa)\land
\forall a\in\dot A(a\textrm{ is an ordinal $\Lstd$-structure}).
\]
We shall show that $p\forces\VP(\dot A)$.

Our approach will be
to show that it is dense below $p$ to force 
$\VP(\dot A)$,
for then $p$ will also force $\VP(\dot A)$.  
So suppose we have $r\leq p$; 
by Lemma~\ref{nicenames} there is a name $\dot B_r$
such that $r\forces\dot A=\dot B_r$, and for every element
$\langle\sigma,q\rangle$
of $\dot B_r$, $q\leq r$ and $\sigma$ is a nice $\P$-name for
an ordinal $\Lstd$-structure.
To avoid concerns about the distinctness of the $\calM$ and $\calN$ we find,
we may thin out $\dot B_r$ to a name $\dot C$, 
still with $|\dot C|=\ka$, such
that if $\langle\sigma_0,q_0\rangle\neq\langle\sigma_1,q_1\rangle$ are
both in $\dot C$, then $\ga_{\sigma_0}\neq\ga_{\sigma_1}$,
and so certainly $\sigma_0^{G'}\neq\sigma_1^{G'}$ in any generic extension
by a $\P$-generic ${G'}$ containing $r$.

Let $R_\P$ be a rigid binary relation on $|\P|$, that is, one admitting 
no non-identity endomorphism; see \cite{AdR:LPAC},
\cite{PuT:CATRGSC}, or the original paper of Vop\v{e}nka, Pultr and
Hedrl\'{\i}n~\cite{VPH:RRAS} for such a construction.
Now in $V$ consider the set
\[
D=\left\{\left\langle
H_{\max(|\trcl(\sigma)|,|\P|)^+},
\in,\langle\sigma,q,R_\P\rangle
\right\rangle
\st\langle\sigma,q\rangle\in\dot C\right\}.
\]
Since $\ka$ is a \Vopenka cardinal in $V$, there are 
$\calM\neq\calN$ in $D$
with an elementary embedding $j:\calM\to\calN$.
Suppose
\[
\calM=\langle H_\al,\in,\langle\sigma_\calM,q_\calM,R_\P\rangle\rangle
\]
and
\[
\calN=\langle H_\be,\in,\langle\sigma_\calN,q_\calN,R_\P\rangle\rangle;
\]
we shall show that 
in any generic extension $V[{G'}]$ by a $\P$-generic ${G'}$ containing $r$,
$j\restr\gamma_{\sigma_\calM}:\ga_{\sigma_\calM}\to\ga_{\sigma_\calN}$
is elementary 
when considered as a map $\sigma_\calM^{G'}\to\sigma_\calN^{G'}$. 

For any $\Lstd$-formula $\varphi$, 
$\sigma_\calM^{G'}\sat\varphi(\al_1,\ldots,\al_n)$ 
\note{do we really want to use $\Lstd$?}
if and only if there is some $q\in {G'}$
such that 
\begin{equation}
q\forces(\sigma_\calM\sat\varphi(\check\al_1,\ldots,\check\al_n)).\tag{$*$}
\end{equation}
Since satisfaction (for set models) is $\Sigma_1$-definable,
the statement ($*$) is also $\Sigma_1$ for models containing $\P$; 
indeed, it can be written in the form
\begin{align*}
\exists\dot s&\exists F\exists X\big(\\
&F \textrm{ is the characteristic function of the $\forces$ relation} \\
&\quad\textrm{for $\Delta_0$ formulae on the (sufficiently large) 
transitive set }X, \\
&\textrm{and }F\textrm{ witnesses that }\\
&\quad q\forces\textrm{``$\dot s$ is the characteristic function of the relation $\sigma_\calM\sat\,$''}, \\
&\textrm{and }F\textrm{ witnesses that}\\
&\quad q\forces\textrm{``$\dot s$ witnesses that $\sigma_\calM\sat\varphi(\check\al_1,\ldots,\check\al_n)$''}\ \big)
\end{align*}
where the main parenthesised part is $\Delta_0$.
As originally shown by L\'evy~\cite{Lvy:HFST}, 
one can prove with a L\"owenheim-Skolem argument
that $H_\la$ is $\Sigma_1$-elementary in $V$ for any uncountable cardinal $\la$.
In particular, $H_\al$, and hence $\calM$, is correct for the statement ($*$).
By elementarity and the $\Sigma_1$-correctness of $\calN$, we thus have 
\[
q\forces(\sigma_\calM\sat\varphi(\check\al_1,\ldots,\check\al_n))\iff
j(q)\forces
(\sigma_\calN\sat\varphi({j(\check\al_1)},\ldots,{j(\check\al_n)})).
\]
By the rigidity of $R_\P$, $j(q)=q$ for any $q\in\P$, and we may conclude 
that for any $\P$-generic $G'$ containing $r$,
$\sigma_\calM^{G'}\sat\varphi(\al_1,\ldots,\al_n)$ if and only if
$\sigma_\calN^{G'}\sat\varphi(j(\al_1),\ldots,j(\al_n))$.

Finally, 
$q_\calM=q_\calN$ by rigidity once more, and if $q_\calM\in G'$, then
$\sigma_\calM^{G'}$ and $\sigma_\calN^{G'}$ are elements of 
$\dot C^{G'}\subset(\dot B_r)^{G'}=\dot A^{G'}$.
Hence,
$q_\calM\forces\VP(\dot A)$.
But now $q_\calM\leq r$ by the construction of $\dot B_r$.
Therefore, it is dense below $p$ to force $\VP(\dot A)$, and so 
$p\forces\VP(\dot A)$.
\end{proof}

\section{Reverse Easton iterations}

Having shown above that 
Vo\-p\v{e}n\-ka cardinals are preserved by small forcing, 
we modify the argument to show that they are moreover preserved 
by all generics for typical $\ka$-length iterations.
This contrasts with the situation for most strong large cardinals,
which are preserved only when generics are carefully chosen to contain
suitable master conditions.
The key idea remains the same as in the previous section: 
because we do not have to preserve all of
the embeddings present in the ground model, a density argument, which
can be carried out without extra assumptions or preparation, will suffice.

Recall that a forcing iteration has \emph{Easton support} if 
direct limits are taken at inaccessible cardinal stages and inverse limits
at other limit stages.
We call a forcing iteration (possibly of class length) a
\emph{reverse Easton iteration} if it has Easton support.
See Cummings~\cite{Cum:IFE} for more on such iterations.

The precise statement that we shall prove for \Vopenka cardinals
is the following.
\begin{thm}\label{REInd}
Let $\ka$ be a \Vopenka cardinal.
Suppose $\langle P_\al\st\al\leq\ka\rangle$ 
is the 
reverse Easton iteration of 
$\langle\dot Q_\al\st\al<\ka\rangle$,
where each $\dot Q_\al$ has cardinality less than $\ka$, and
for every $\ga<\ka$, there is an $\eta_0$ such that for all
$\eta\geq\eta_0$,
\[
\forces_{P_\eta}\dot Q_\eta\text{ is $\ga$-directed-closed.}
\]
Then 
\[
\forces_{P_\ka}\ka\text{ is a \Vopenka cardinal.}
\]
\end{thm}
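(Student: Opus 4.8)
The plan is to adapt the density argument from Theorem~\ref{smallpres} to the reverse Easton setting, exploiting once more the fact that we need only lift \emph{some} embedding for each proper class rather than a fixed one. Let $G$ be $P_\ka$-generic over $V$; we must show that for every set $A$ of cardinality $\ka$ of ordinal $\Lstd$-structures in $(V_\ka)^{V[G]}$, some $\VP(A)$-witnessing pair exists. First I would fix a name $\dot A$ and a condition $p\in G$ forcing that $\dot A$ is a $\ka$-sized set of ordinal $\Lstd$-structures, and show $p\forces_{P_\ka}\VP(\dot A)$ by showing it is dense below $p$ to do so. Given $r\leq p$, I would apply (a version of) Lemma~\ref{nicenames} to replace $\dot A$ below $r$ by a name $\dot B_r$ of nice names for ordinal $\Lstd$-structures, then thin to $\dot C$ so that distinct names have distinct domain-ordinals, guaranteeing distinctness of the resulting structures.

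The crucial structural point is that $P_\ka$ factors as $P_\eta * \dot P^{\textrm{tail}}$ for appropriate $\eta<\ka$. Because each $\dot Q_\al$ has cardinality less than $\ka$ and $\ka$ is inaccessible, every nice name $\sigma$ appearing in $\dot C$, together with its domain $\ga_\sigma$, is really a $P_\eta$-name for some $\eta<\ka$; using the $\ka$-cc of $P_\ka$ (which follows from inaccessibility and the cardinality bound on the iterands) one finds a single $\eta<\ka$ bounding all the names in $\dot C$. The tail $\dot P^{\textrm{tail}}$ is forced to be $\ga$-directed-closed for any prescribed $\ga$, by the hypothesis on $\eta_0$; in particular, by choosing $\eta$ large enough I would arrange that the tail is sufficiently closed that it adds no new small structures or embeddings relevant to the argument, so that the relevant satisfaction facts are decided by the $P_\eta$ part. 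This is the analogue of ``small forcing'' $P_\eta$ sitting below a highly-closed tail, and it is where the reverse Easton support and increasing closure are essential.

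Next I would run the Vop\v enka-cardinal argument in $V$ on the set
\[
D=\left\{\left\langle
H_{\max(|\trcl(\sigma)|,|P_\eta|)^+},
\in,\langle\sigma,q,R_{P_\eta}\rangle
\right\rangle
\st\langle\sigma,q\rangle\in\dot C\right\},
\]
where $R_{P_\eta}$ is a rigid relation on the underlying set of $P_\eta$. Vop\v enka's Principle in $V$ yields distinct $\calM,\calN\in D$ and an elementary $j:\calM\to\calN$. As in Theorem~\ref{smallpres}, the key is that the forcing relation ``$q\forces_{P_\eta}(\sigma_\calM\sat\varphi(\check\al_1,\ldots,\check\al_n))$'' is $\Sigma_1$, hence decided correctly by the $H_\al$-component of $\calM$ by L\'evy's $\Sigma_1$-reflection, and that rigidity of $R_{P_\eta}$ forces $j(q)=q$ for $q\in P_\eta$. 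This transfers the satisfaction equivalence across $j$ and shows $j\restr\ga_{\sigma_\calM}$ is elementary between $\sigma_\calM^{G'}$ and $\sigma_\calN^{G'}$ for any suitable generic $G'$.

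The main obstacle, and the real novelty over the small-forcing case, is the final lifting step: the embedding $j$ lives in $V$ and respects only the $P_\eta$-component, so I must argue that $\sigma_\calM^{G'}$ and $\sigma_\calN^{G'}$ land inside $\dot A^{G'}$ and that the embedding survives into the full extension $V[G]$. Here I would use the closure of the tail: since $\dot P^{\textrm{tail}}$ is $\ga$-directed-closed for large $\ga$ and the embedding $j$ is a small object (coded below $\ka$), the tail forcing neither destroys $j$ nor adds new elementary embeddings that could interfere, and the structures $\sigma_\calM^{G'},\sigma_\calN^{G'}$ computed from the $P_\eta$-generic are unchanged by the tail. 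Combined with rigidity giving $q_\calM=q_\calN$, I would conclude that the common condition forces $\VP(\dot A)$ and lies below $r$, establishing density below $p$ and hence $p\forces_{P_\ka}\VP(\dot A)$. Verifying that the tail genuinely preserves the relevant embedding — i.e.\ that no subset of $\ga_{\sigma}$ or relation on it is added or altered at the tail stage — is the step requiring the most care, and is precisely where the $\ga$-directed-closure hypothesis earns its keep.
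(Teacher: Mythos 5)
There is a genuine gap at the linchpin of your reduction: the claim that ``one finds a single $\eta<\kappa$ bounding all the names in $\dot C$.'' This is false. After thinning so that distinct names have distinct domain-ordinals $\ga_\sigma$, the set $\dot C$ still has $\ka$ many elements, so the ordinals $\ga_\sigma$ are unbounded in $\ka$. A name for a binary relation on $\ga_\sigma$ cannot in general be taken to be a $P_\eta$-name for any $\eta$ below (roughly) $\ga_\sigma$ itself; the best one can do is make $\dot R^\sigma$ a $P_{\de(\ga_\sigma)}$-name, where $\de(\ga_\sigma)$ is a reflecting stage \emph{depending on} $\ga_\sigma$ and hence ranging unboundedly in $\ka$. (This is exactly why the paper introduces \emph{nice local names}, with locality relative to $\ga_\sigma$, rather than uniform boundedness.) The $\ka$-cc of $P_\ka$ does not help here: it bounds antichains, not the stage at which subsets of large $\ga<\ka$ are decided. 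Once this uniform bound is lost, the entire transplant of the small-forcing argument collapses: there is no single set-sized $P_\eta$ on which to put a rigid relation, the conditions $q$ appearing in $\dot C$ themselves live at unboundedly many stages, and the $\Sigma_1$-correctness/rigidity computation has nothing to act on. A further warning sign is that your argument, as written, uses the tail closure only to say it ``adds no new small structures,'' i.e.\ mere distributivity; but the paper observes that reverse Easton iterations of merely $\al$-closed forcings can \emph{destroy} \Vopenka cardinals (by adding Kurepa trees that kill ineffables below $\ka$), so any correct proof must use directedness in an essential way, which yours does not.

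The paper's actual mechanism is different and is the idea you are missing. One applies the extendibility characterisation (Proposition~\ref{VopCardExtA}) with the \emph{name} $\dot B$ itself as the predicate, obtaining a cardinal $\al$ extendible below $\ka$ for $\dot B$. One then fixes a suitable reflecting stage $\xi$, factorises $P_\ka=P_\xi*P^{[\xi,\ka)}$, and shows density \emph{in the tail}: given a tail condition $r$, choose $\eta$ large enough to absorb $r$ and so that $P^{[\eta,\ka)}$ is $|G_\xi|^+$-directed-closed, take an embedding $j:V_\eta\to V_\la$ witnessing $\eta$-extendibility of $\al$ for $\dot B$ (so $j$ moves a name $\langle\sigma,q\rangle\in\dot B$ to another name $\langle j(\sigma),j(q)\rangle\in\dot B$), and use the directedness of $j``G_\xi$ together with the directed-closure of the tail to find a master condition $g$ below $r$ forcing $j\restr V_\xi$ to lift. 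The lifted embedding restricted to $\genval{\sigma}{G}$ is then the required elementary embedding between two distinct elements of $\genval{\dot A}{G}$. The ``many embeddings'' philosophy you invoke is indeed the right one, but it is realised by choosing a different extendibility embedding for each tail condition $r$, not by pushing all names below one small initial segment.
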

Note also that a full class-length iteration will often preserve all
\Vopenka cardinals by Theorem~\ref{REInd} and 
Corollary~\ref{2StepIt}, so long as the tail of the iteration from any
\Vopenka cardinal $\ka$ is $\ka^+$-directed closed.

Since a direct limit is taken at $\ka$, we can and will
identify conditions in $P_\ka$ with conditions in 
$\bigcup_{\al<\ka}P_\al$.
Further, we observe that the ``smallness'' requirement on the names
$\dot Q_\al$ results in a certain amount of 
closure with respect to features of the forcing iteration 
being reflected downwards.
\begin{defn}\label{PkaRefl}
Let 
$\langle P_\al\st\al\leq\ka\rangle$ 
be a forcing iteration as in the statement of Theorem~\ref{REInd}.
We say that a Mahlo cardinal $\de<\ka$ is
\emph{$P_\ka$-reflecting} if
\begin{enumerate}
\item
$|P_\de|$ is at most $\de$, and 
\item
for all $\eta\geq\de$, 
$\forces_{P_\eta}\dot Q_\eta\textnormal{ is $\de$-directed-closed.}$
\end{enumerate}
\end{defn}
\begin{lemma}\label{PClosRefl}
Suppose $\ka$ is a \Vopenka cardinal, and 
$\langle P_\al\st\al\leq\ka\rangle$ 
is a forcing iteration as in the statement of Theorem~\ref{REInd}.
Then the set of $P_\ka$-reflecting Mahlo cardinals 
is stationary in $\ka$.
\end{lemma}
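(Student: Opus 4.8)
The plan is to reduce the lemma to a single application of the extendibility characterisation in Proposition~\ref{VopCardExtA}. Fix an arbitrary club $C\subseteq\ka$; it suffices to produce a $P_\ka$-reflecting Mahlo cardinal lying in $C$. First I would gather the relevant data of the iteration into one subset $A\subseteq V_\ka$. Since each $\dot Q_\al$, and hence each $P_\al$ for $\al<\ka$, has cardinality less than the inaccessible $\ka$, each lies in $V_\ka$, so the sequences $\langle P_\al\st\al<\ka\rangle$ and $\langle\dot Q_\al\st\al<\ka\rangle$ may be coded into $A$. To $A$ I would also adjoin the graphs of three functions $\ka\to\ka$: the size function $h(\ga)=|P_\ga|$; the closure function $f(\ga)$ equal to the least $\eta_0$ witnessing the hypothesis of Theorem~\ref{REInd} for $\ga$ (so that $\forces_{P_\eta}\dot Q_\eta$ is $\ga$-directed-closed for all $\eta\geq f(\ga)$), which is below $\ka$ by that hypothesis; and $c(\be)=\min(C\setminus(\be+1))$. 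Each is total into $\ka$, so their graphs are subsets of $V_\ka$. Applying Proposition~\ref{VopCardExtA} to this $A$ yields a cardinal $\al<\ka$ that is extendible below $\ka$ for $A$.

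The heart of the argument is the following uniform reflection claim: \emph{$\al$ is closed under every function $\ka\to\ka$ whose graph is coded in $A$}. To see this for such a function $g$, I would fix $\ga<\al$, put $\mu=g(\ga)$, choose $\eta$ with $\max(\al,\mu)<\eta<\ka$, and take an elementary embedding $j:\langle V_\eta,\in,A\cap V_\eta\rangle\to\langle V_\zeta,\in,A\cap V_\zeta\rangle$ with $\crit(j)=\al$ and $j(\al)>\eta$. Since $\ga,\mu<\eta$, the domain model satisfies ``$g(\ga)=\mu$'', so by elementarity, and as $j(\ga)=\ga$, the target model satisfies ``$g(\ga)=j(\mu)$''; as $A\cap V_\zeta$ codes $g$ correctly below $\zeta$, this gives $g(\ga)=j(\mu)$ and hence $\mu=j(\mu)$. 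But if $\mu\geq\al$ then $j(\mu)\geq j(\al)>\eta>\mu$, a contradiction; so $\mu<\al$. Thus $\al$ is closed under $f$, $h$ and $c$. I expect this reflection step, exploiting $j(\al)>\eta$ with $\eta$ taken above the function value, to be the key idea that makes the lemma go through with no preparation.

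From the claim the three requirements follow. Closure under $c$ shows $C\cap\al$ is unbounded in $\al$; as $\al$ is a limit ordinal and $C$ is closed, $\al\in C$. That $\al$ is measurable, hence Mahlo and a regular limit cardinal, is standard: taking $\eta=\al+2$ one forms the derived ultrafilter $U=\{X\subseteq\al\st\al\in j(X)\}$, which is $\al$-complete and nonprincipal. Condition~(1) of Definition~\ref{PkaRefl} follows from closure under $h$ together with the fact that an Easton-support iteration takes a direct limit at the inaccessible stage $\al$, so $P_\al=\bigcup_{\ga<\al}P_\ga$ has cardinality at most $\al$ once $|P_\ga|<\al$ for all $\ga<\al$. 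Condition~(2) follows from closure under $f$ by a directed-closure interpolation: for $\eta\geq\al$ and a directed set of conditions of size $\mu<\al$, regularity of $\al$ and its being a limit cardinal give $\mu^+<\al$, whence $f(\mu^+)<\al\leq\eta$, so $\forces_{P_\eta}\dot Q_\eta$ is $\mu^+$-directed-closed and the directed set has a lower bound; thus $\forces_{P_\eta}\dot Q_\eta$ is $\al$-directed-closed for every $\eta\geq\al$.

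Putting this together, $\al$ is a $P_\ka$-reflecting Mahlo cardinal in $C$. Since $C$ is an arbitrary club, the set of $P_\ka$-reflecting Mahlo cardinals is stationary in $\ka$. Beyond the reflection claim, the only points requiring care are the routine bookkeeping that the coded functions are genuinely total into $\ka$ and that $A\cap V_\zeta$ decodes them correctly below $\zeta$, and the interpolation turning pointwise closure under $f$ into the uniform ``for all $\eta\geq\al$'' form of condition~(2); I would expect these to be the fiddliest, but not genuinely obstructive, steps.
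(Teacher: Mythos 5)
Your proof is correct, but it follows a genuinely different route from the paper. The paper's argument is a two-line factorisation: \Vopenka cardinals are $2$-Mahlo (citing Kanamori, Corollary~24.17), so the Mahlo cardinals below $\ka$ form a stationary set, while conditions (1) and (2) of Definition~\ref{PkaRefl} are shown to define a club by ``a standard closure argument''; intersecting the two gives stationarity. You instead fix an arbitrary club $C$, fold the iteration data, the functions $h$, $f$, $c$, and (implicitly) $C$ into a single predicate $A\subseteq V_\ka$, and make one application of Proposition~\ref{VopCardExtA} to produce an $\al$ that is extendible below $\ka$ for $A$; the reflection claim ($\al$ is closed under every function coded in $A$, via $j(\al)>\eta$ with $\eta$ chosen above the function value) then delivers $\al\in C$, measurability (hence Mahloness) of $\al$, and both conditions of $P_\ka$-reflecting in one stroke. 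The trade-off: the paper's version is shorter and modular but leans on the cited $2$-Mahlo fact and leaves the club verification implicit, whereas yours is self-contained --- it re-derives the needed fragment of $2$-Mahloness (measurables are stationary below $\ka$) from the same extendibility characterisation and spells out the closure argument, including the $\mu\mapsto\mu^+\mapsto f(\mu^+)$ interpolation for condition~(2) that the paper glosses over. The only points to keep an eye on are shared with the paper rather than specific to you: that $|P_\ga|<\ka$ for all $\ga<\ka$ (an induction using inaccessibility of $\ka$ and $|\dot Q_\ga|<\ka$) so that $h$ really maps into $\ka$, and that the $\eta_0$ in the hypothesis of Theorem~\ref{REInd} may be taken below $\ka$ so that $f$ does too.
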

\begin{proof}
\Vopenka cardinals are 2-Mahlo --- 
see Kanamori~\cite[Corollary~24.17]{Kan:THI}.
It therefore suffices to show that the set of cardinals 
\[
\{\ga<\ka\st
\Big|\bigcup_{\al<\ga}P_\al\Big|\leq\ga\land
\forall\eta\geq\ga(\forces_{P_\eta}\dot Q_\eta\text{ is $\iota$-directed-closed.})\}
\] 
is closed unbounded in $\ka$.  
But this follows from a standard closure argument.
\end{proof}
For every $\ga<\ka$,
we shall denote by $\de(\ga)$ the least $P_\ka$-reflecting Mahlo cardinal
strictly greater than $\ga$.

As in the previous section, an important part of the proof is that we can be
very selective about the kind of names with which we work.

\begin{defn}
Let
$P_\ka$ be a forcing iteration as in the statement of
Theorem~\ref{REInd}.  
A \emph{nice local $P_\ka$-name for an ordinal $\calL$-structure} 
is a name $\sigma$ satifying the following requirements:
\begin{enumerate}
\item $\sigma$ is the canonical name for an ordinal $\calL$-structure
$\langle\gamma_\sigma,R^\sigma\st$\mbox{$R\in\calL\rangle$} in $V[G]$
with components named by $\check\ga_\sigma$ and $\dot R^\sigma$ for
$R\in\calL$,
\item \label{locality}
for every $n$-ary $R\in\calL$, 
the name $\dot R^\sigma$ is a $P_{\de(\ga_\sigma)}$-name for a subset of $\ga_\sigma^n$.
\end{enumerate}
\end{defn}

\begin{lemma}\label{nicelocnames}
For any finite language $\calL$ and forcing iteration $P_\ka$ as in
the statement of Theorem~\ref{REInd}, 
given a name $\dot A$ and a condition $p\in P_\ka$ such that 
\[
p\forces\dot A\text{ is a set of ordinal $\calL$-structures,}
\]
there is a name $\dot B$ such that $p\forces\dot A=\dot B$, 
and for every element $\langle\sigma,q\rangle$ of $\dot B$, 
$\sigma$ is a nice local name for an ordinal $\calL$-structure 
and $q\leq p$.
\end{lemma}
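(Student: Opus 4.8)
The statement to prove is Lemma~\ref{nicelocnames}: given a $P_\ka$-name $\dot A$ and condition $p$ forcing that $\dot A$ is a set of ordinal $\calL$-structures, we produce an equivalent name $\dot B$ all of whose structures are named by \emph{nice local} $P_\ka$-names. The plan is to mimic the proof of Lemma~\ref{nicenames}, building for each element a nice name, but now with the extra \emph{locality} requirement (item~\ref{locality}) that each $\dot R^\sigma$ be a $P_{\de(\ga_\sigma)}$-name for a subset of $\ga_\sigma^n$. The key conceptual point driving this is that $\de(\ga_\sigma)$ is a $P_\ka$-reflecting Mahlo cardinal, so the tail of the iteration above $\de(\ga_\sigma)$ is $\de(\ga_\sigma)$-directed-closed, and hence adds no new subsets of $\ga_\sigma$ (since $\ga_\sigma<\de(\ga_\sigma)$); this is exactly what lets us push the relevant information down to an initial segment of the iteration.

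First I would take an arbitrary element $\langle\dot a,r\rangle\in\dot A$ and, as in Lemma~\ref{nicenames}, restrict attention to conditions $q\le p$, $q\le r$ deciding $\dom(\dot a)=\ga$ for some ordinal $\ga$. For each such $q$, I want to produce a nice local name $\sigma_{\dot a,r,q}$ with $\ga_{\sigma}=\ga$ and $q\forces\sigma_{\dot a,r,q}=\dot a$. Because $\ga<\de(\ga)$ and the tail $P_\ka/P_{\de(\ga)}$ is $\de(\ga)$-directed-closed, forcing with $P_\ka$ over $V^{P_{\de(\ga)}}$ adds no new subsets of $\ga$, and so no new interpretations of the $n$-ary relations $R^{\dot a}\subseteq\ga^n$. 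Thus whether $\check{(\be_1,\ldots,\be_n)}\in R^{\dot a}$ holds is already decided by $P_{\de(\ga)}$. Concretely I would, for each $n$-ary $R\in\calL$ and each tuple in $\ga^n$, choose a maximal antichain of $P_{\de(\ga)}$-conditions below $q{\restr}\de(\ga)$ forcing membership, and assemble these into a $P_{\de(\ga)}$-name $\dot R^\sigma$ for a subset of $\ga^n$, exactly satisfying item~\ref{locality}. Setting $\dot B=\bigcup_{\langle\dot a,r\rangle\in\dot A}\{\langle\sigma_{\dot a,r,q},q\rangle\st q\le p,\,q\le r,\,q\text{ decides }\dom(\dot a)\}$ then gives $p\forces\dot A=\dot B$ by the same density-of-deciding-conditions argument as before.

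The main obstacle is verifying the locality claim rigorously, namely that the interpretation of each relation is genuinely determined at stage $\de(\ga)$ rather than only at $\ka$. This rests on the factorisation $P_\ka\cong P_{\de(\ga)}*\dot{P}_{\mathrm{tail}}$ together with the $\de(\ga)$-directed-closure of the tail; I would invoke the standard fact that $\ga$-directed-closed forcing adds no new functions from (hence subsets of) any ordinal below $\ga$, applied with $\ga$ replaced by $\de(\ga)>\ga$. One technical subtlety worth flagging is that $q$ itself is a $P_\ka$-condition, so when choosing the antichains $A^R_{(\be_1,\ldots,\be_n)}$ below $q$ I must work below $q{\restr}\de(\ga)$ in $P_{\de(\ga)}$ and appeal to the tail's closure to transfer the decisions back up; this is where the $P_\ka$-reflecting Mahlo cardinals of Lemma~\ref{PClosRefl} and the definition of $\de(\ga)$ do the essential work. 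Everything else is the routine nice-name bookkeeping already carried out in the proof of Lemma~\ref{nicenames}, and the finiteness of $\calL$ ensures the construction involves only finitely many relation symbols, so no issues of size arise in assembling $\sigma$.
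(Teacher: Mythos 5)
Your overall architecture matches the paper's: mimic Lemma~\ref{nicenames}, and use the factorisation $P_\ka\cong P_{\de(\ga)}*\dot P^{[\de(\ga),\ka)}$ together with the $\de(\ga)$-directed-closure of the tail to obtain $P_{\de(\ga)}$-names for the relations. But there is a genuine gap at the step where you claim that, for \emph{every} $q\leq p,r$ deciding $\dom(\dot a)=\ga$, ``whether $\check{(\be_1,\ldots,\be_n)}\in R^{\dot a}$ holds is already decided by $P_{\de(\ga)}$''. Closure of the tail gives you that the \emph{value} of $R^{\dot a}$ in $V[G]$ lies in $V[G_{\de(\ga)}]$; it does not give you that this value is independent of the tail generic, nor that $q$ forces $R^{\dot a}$ to equal any single fixed $P_{\de(\ga)}$-name. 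For instance, if the tail contains a copy of $\Add(\de(\ga),1)$, a name such as ``$\{0\}$ if the tail generic function takes value $1$ at $0$, and $\emptyset$ otherwise'' always evaluates to a set lying in $V[G_{\de(\ga)}]$, yet \emph{which} set it is depends on the tail; no antichain of $P_{\de(\ga)}$-conditions below $q\restr\de(\ga)$ can capture this, so your proposed $\dot R^\sigma$ will not in general be forced by $q$ to equal $R^{\dot a}$.

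The correct statement is a \emph{density} statement, and this is how the paper proceeds: it defines $Q_{\dot a,r}$ to be the set of those $q\leq p,r$ which decide $\dom(\dot a)=\ga$ \emph{and} for which there already exist $P_{\de(\ga)}$-names $\dot R$ with $q\forces R^{\dot a}=\dot R$, and then uses the factorisation and the $\de(\ga)$-directed-closure of the tail only to show that this more stringent set is still dense below $p$ and $r$ (every subset of $\ga$ in the extension admits a $P_{\de(\ga)}$-name, so below any condition one can pass to such a $q$). Only these $q$ are admitted into $\dot B$, and density is all that is needed to conclude $p\forces\dot A=\dot B$. Your argument becomes correct once you shrink your set of conditions from ``all $q$ deciding the domain'' to this dense subset; as written, the antichain construction fails for the conditions you have not excluded.
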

Finiteness of $\calL$ is a stronger requirement than is necessary,
but it is convenient and suffices for our purposes.
\begin{proof}
The proof is much like that for Lemma~\ref{nicenames}, 
except that we 
dictate that
the conditions $q$ that we use should satisfy more
stringent requirements.
For each $\langle\dot a,r\rangle\in\dot A$, let
\begin{multline*}
Q_{\dot a,r} = \Big\{q\in P_\ka\st
q\leq p\land q\leq r\land
\exists\ga\Big(
(q\forces\dom(\dot a)=\ga)\land\\ 
\forall R\in\calL\exists\dot R\big(
\dot R\text{ is a $P_{\de(\ga)}$-name}
\land(q\forces R^{\dot a}=\dot R)\big)
\Big)\Big\}
\end{multline*}
where 
$R^{\dot a}$ denotes the 
canonical name for the interpretation of $R$ in 
$\genval{\dot a}{G}$, and as is standard practice,
we abuse notation regarding for which partial order any
given name is actually a name.
This set $Q_{\dot a, r}$ will be dense for conditions below both $p$ and $r$,
as $P_\ka$ may be factorised as \mbox{$P_{\de(\ga)}*\dot P^{[\de(\ga),\ka)}$}
with 
\[
\forces_{P_{\de(\ga)}}P^{[\de(\ga),\ka)}\text{ is $\de(\ga)$-directed-closed}
\]
(by for example Corollary~2.4 and Theorem~5.5 of Baumgartner~\cite{Bau:IF})
and so in particular every subset of $\ga$ in the extension can be named by a 
$P_{\de(\ga)}$-name.
Constructing names $\sigma_{\dot a,r,q}$ from the $\ga$ and names
$\dot R$ from the definition of $Q_{\dot a, r}$, the name 
$\dot B=\{\langle\sigma_{\dot a,r,q},q\rangle\st
\langle\dot a,r\rangle\in\dot A\}$ will be as desired.
\end{proof}

\begin{proof}[Proof of Theorem~\ref{REInd}]
Let $\ka$ and $P_\ka$ be as in the statement of the theorem, and 
let $G$ be $P_\ka$-generic over $V$;
we shall show that $\ka$ remains \Vopenka in $V[G]$.
Suppose that 
$\dot A$ is a $P_\ka$-name,
and $p\in G$ is such that
\[
p\forces\dot A\subset V_{\check\ka}\land|\dot A|=\check\ka\land
\dot A\text{ is a set of ordinal $\calL$-structures in }V_{\check\ka}.
\]
Let $\dot B$ be as in Lemma~\ref{nicelocnames}.
Using the Proposition~\ref{VopCardExtA} characterisation of 
\Vopenka cardinals, let $\al<\ka$ be extendible below $\ka$ for $\dot B$
in $V$.
Let $\xi$ be 
the least $P_\ka$-reflecting Mahlo cardinal such that 
there is a $\langle\sigma,q\rangle\in\dot B\smallsetminus V_\al$
where $q\in G\cap P_\xi$ and
$\sigma$ names an ordinal $\Lstd$-structure
$\langle\gamma,E,R\rangle$ with
$\al\leq\ga<\xi$
(whence $\dot E$ and $\dot R$ 
will be $P_\xi$-names).
We may factorise $P_\ka$ as $P_\xi*P^{[\xi,\ka)}$;
we shall show that given $G_\xi=G\cap P_\xi$,
it is dense in $P^{[\xi,\ka)}$ to be a master condition for an embedding
from $\sigma_{G_\xi}$ to another element of $B$.

Towards that end, consider an arbitrary $\dot r$ forced to be in 
$\dot P^{[\xi,\ka)}$.
We have chosen $\xi$ such that $|P_\xi|\leq\xi$ and a direct limit
is taken at $\ka$, 
hence,
we may take $\eta<\ka$ large enough that 
\[
\forces_{P_\xi}\dot r\in\dot P^{[\xi,\eta)}
\]
(see for example Jech~\cite[Lemma~21.8]{Jech:ST}) and 
\[
\forces_{P_\eta}\dot P^{[\eta,\ka)}\text{ is $|G_\xi|^+$-directed-closed}. 
\]
Let $j:V_\eta\to V_\la$ in $V$ be an elementary embedding witnessing the
$\eta$-extendibility of $\al$ below $\ka$ for $\dot B$.
In particular, this entails that $j(\al)>\eta$.
The cardinal $\al$
is 
inaccessible, and so for any $q\in P_\eta$, $\supp(q)\cap\al$ is bounded
by some $\be<\al$.
Therefore, by elementarity and the fact that $\crit(j)=\al$, 
$\supp(j(q))\cap j(\al)$ is bounded by 
$\be$, and $j(q)\restr\be=q\restr\be$.
Note that this implies that $\supp(j(q))\cap[\xi,\eta)=\emptyset,$
and we can extend $j(q)$ to $j(q)\land r$, where
\[
(j(q)\land r)(\zeta)=
\begin{cases}
r(\zeta)&\text{if }\xi\leq\zeta<\eta\\
j(q)(\zeta)&\text{otherwise.}
\end{cases}
\]

Since $G_\xi$ is a filter, $j``G_\xi$ is directed,
so by the choice of $\eta$
there is a single master condition $g\in (P^{[j(\al),\ka)})^{V[G_\xi]}$
such that $g\leq j(q)\restr[j(\al),\ka)$ for all $q\in G_\xi$.
Thus, the condition $g\land r\in P^{[\xi,\ka)}$ extends $r$ and
forces that $j\restr V_\xi:V_\xi\to V_{j(\xi)}$ lifts to  
an embedding $j':V_\xi[G_\xi]\to V_{j(\xi)}[G_{j(\xi)}]$
defined by $j'(\genval{\dot x}{G_\xi})=\genval{(j(\dot x))}{G_{j(\xi)}}$,
which is well-defined and elementary by the definability of forcing.
Note that $V_\xi[G_\xi]=V_\xi^{V[G_\xi]}$ 
since $\xi$ is $P_\ka$-reflecting,
and similarly for $j(\xi)$ by elementarity.

Now consider $\langle\sigma,q\rangle\in\dot B$, chosen above.
We have $j(\langle\sigma,q\rangle)\in\dot B$ by the choice of $j$, that is,
$\langle j(\sigma),j(q)\rangle\in\dot B$.  Since $q\in G_\xi$ by assumption,
$g\land r\forces_{P^{[\xi,j(\xi))}}j(q)\in \dot G_{j(\xi)}$.
By the definability of satisfaction for models and the elementarity of $j'$,
we have
\[
g\land r\forces_{P^{[\xi,j(\xi))}}
j'\restr\genval{\sigma}{G}:
\genval{\sigma}{G}
\to j'(\genval{\sigma}{G})=\genval{j(\sigma)}{G}\text{ is elementary.}
\]
Of course, $j'\restr\genval{\sigma}{G}$ cannot be the identity, 
as the domain of $\genval{\sigma}{G}$ 
is at least $\al=\crit(j)$.
Thus
$g\land r$ extends $r$ and
forces there to be a non-trivial elementary embedding between 
two distinct elements of
$\genval{\dot B}{G}=A$, as was required.
\end{proof}

With Theorem~\ref{REInd} at our disposal, many relative consistency results
become immediate by standard techniques.
As an example, we list a few principles familiar from G\"odel's constructible
universe $L$.
\begin{coroll}\label{VcCon}
If the existence of \Vopenka cardinals is consistent, then the existence of
\Vopenka cardinals is also consistent with each of the following.
\begin{enumerate}
\item GCH
\item $V=\HOD$
\item $\diamondsuit^+_{\ka^+}$ holds for every infinite cardinal $\ka$.
\item\label{morass} Morasses exist at every uncountable non-\Vopenka cardinal.
\end{enumerate}
\end{coroll}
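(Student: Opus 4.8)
The plan is to derive each consistency result from Theorem~\ref{REInd} by exhibiting, for each principle, a reverse Easton iteration of increasingly directed-closed forcings of the required shape that forces the principle to hold. Since Theorem~\ref{REInd} guarantees that any such iteration preserves the \Vopenka cardinal $\ka$, and Corollary~\ref{2StepIt} lets us continue the iteration past $\ka$ with a sufficiently directed-closed tail, it suffices in each case to identify the classical forcing that achieves the principle and verify that it can be arranged as a reverse Easton iteration whose $\ga$th-stage forcing becomes $\ga$-directed-closed for large $\ga$. The guiding observation is that all four principles are obtained by the standard ``coding'' or ``guessing-sequence'' iterations used to force them in $L$-like extensions, and these iterations are built precisely from increasingly closed stages.

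First I would handle GCH. The plan is to use the Easton-style iteration $\langle P_\al\st\al\in\ORD\rangle$ that forces GCH stage by stage, where $\dot Q_\al$ is (forced to be) $\Add(\la^+,1)$-style collapse--or more directly the iteration whose $\la$th stage forces $2^\la=\la^+$ by adding a surjection with conditions of size $\la$; each such stage is $\la^+$-directed-closed, so for every $\ga<\ka$ the tail stages are eventually $\ga$-directed-closed. This iteration satisfies the hypotheses of Theorem~\ref{REInd}, preserving the \Vopenka cardinal while forcing GCH below, and by Corollary~\ref{2StepIt} a $\ka^+$-directed-closed continuation forces GCH everywhere. For $\diamondsuit^+_{\ka^+}$ at all infinite $\ka$ the plan is identical, using the well-known iteration adding $\diamondsuit^+$-sequences whose stages are appropriately directed-closed; indeed $\diamondsuit^+_{\ka^+}$ subsumes GCH at $\ka$, so a single iteration can be arranged to force item~(1) as a consequence of item~(3).

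For $V=\HOD$, I would force a definable well-ordering of the universe by the standard coding forcing (see e.g.\ the $L$-like coding that adds, at each stage, a subset of a cardinal whose pattern codes the generic object into the continuum function or into $\square$-like combinatorics), arranged as a reverse Easton iteration of increasingly directed-closed stages; this matches the parenthetical remark earlier in the paper that $V=\HOD$ may be forced while preserving \VPr. For item~(\ref{morass}), the plan is to iterate the usual $<\ka^+$-strategically closed (hence highly directed-closed) forcing that adds a morass at each uncountable cardinal $\ka$, but restricting attention to the \emph{non}-\Vopenka cardinals so that the directed-closure hypotheses of Theorem~\ref{REInd} are met at the \Vopenka cardinals under consideration; the morass-adding forcing at a cardinal $\ga$ is $\ga$-closed, fitting the template exactly.

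The main obstacle I expect is not any single forcing construction---each is classical---but rather the bookkeeping needed to interleave the four iterations into a single reverse Easton iteration whose stage-$\al$ forcing is genuinely $\ga$-directed-closed for all large $\al$, so that the hypothesis ``for every $\ga<\ka$ there is $\eta_0$ such that $\forces_{P_\eta}\dot Q_\eta$ is $\ga$-directed-closed for $\eta\geq\eta_0$'' is verified. In particular, care is needed at the \Vopenka cardinals themselves: the morass item explicitly exempts \Vopenka cardinals, and one must check that omitting those stages does not disrupt the Easton support structure or the directed-closure of the tail. Once the iteration is seen to have the required increasing directed-closure and each factor is small relative to the \Vopenka cardinal in play, Theorem~\ref{REInd} and Corollary~\ref{2StepIt} deliver each conclusion immediately, so I would present the corollary as four short applications rather than a single monolithic argument.
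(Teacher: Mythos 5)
Your proposal takes essentially the same route as the paper: for each principle one exhibits a known reverse Easton iteration of increasingly directed-closed set forcings whose tail beyond a \Vopenka cardinal $\ka$ is $\ka^+$-directed-closed, and then applies Theorem~\ref{REInd} together with Corollary~\ref{2StepIt}; the paper's proof is exactly this observation plus citations to the relevant constructions (Jensen for GCH, Brooke-Taylor and Asper\'o--Friedman for $V=\HOD$, Cummings--Foreman--Magidor for $\diamondsuit^+$, Velleman and Brooke-Taylor--Friedman for morasses), and no interleaving is needed since each item is a separate consistency statement, as you note at the end. One caution: your parenthetical ``$<\ka^+$-strategically closed (hence highly directed-closed)'' asserts a false implication --- strategic closure does not imply directed closure, and the paper explicitly warns after Theorem~\ref{REInd} that weaker closure notions do \emph{not} suffice (an iteration of merely $\al$-closed forcings can destroy ineffability below $\ka$ and hence all \Vopenka cardinals). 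The corollary goes through because the specific morass forcings cited are in fact sufficiently directed-closed, not by any general inference from (strategic) closure.
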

\begin{proof}
There are known reverse Easton iterations of increasingly 
directed closed forcings
to obtain each of the listed properties,
such that the tail of the iteration from any \Vopenka cardinal $\ka$
is $\ka^+$-directed closed.  
For GCH, see Jensen~\cite{Jen:MCG},
for $V=\HOD$ see 
Brooke-Taylor~\cite{Me:LCDWO} or
Asper\'o and Friedman~\cite{AsF:LCLDWOU},
for $\diamondsuit^+_{\ka^+}$ 
see Cummings, Foreman and Magidor~\cite[Section~12]{CFM:SSSR},
and for morasses see Velleman~\cite{Vell:MDF} or \cite{Vell:SiM} or 
Brooke-Taylor and Friedman~\cite{Me:LCMor}.
\end{proof}

Part~\ref{morass} of Corollary~\ref{VcCon} raises the following question.

\begin{qn}
Is it consistent, relative to the existence of a Vo\-p\v{e}n\-ka cardinal,
to have a morass at a \Vopenka cardinal?
\end{qn}

Note that \Vopenka cardinals generally do not have 
the kind of downward reflection properties that one usually expects of strong
large cardinals, as they are not themselves weakly compact in general.
Thus, it should not be surprising that
Theorem~\ref{REInd} can be used to make properties
that hold at $\ka$ fail everywhere below $\ka$.

We also observe that in the proof of Theorem~\ref{REInd},
the assumption of increasing directed-closure for the 
partial orders $\dot Q_\al$, as opposed to some weaker form of closure, was 
necessary.  In particular, using $\al$-closed forcings, one can obtain
$\al$-Kurepa trees on inaccessible $\al$, and $\al$ will not be
ineffable in the extension --- see Cummings~\cite[Section~6]{Cum:IFE}.
But there must be many ineffable cardinals below any \Vopenka cardinal, so
a reverse Easton iteration of such forcings must destroy all
\Vopenka cardinals.

\section{Definable \VPr}\label{DVP}

We now extend the results of the previous sections
to the definable class form of \VPr.  
The 
forcing partial orders used in this section will correspondingly 
not always be sets. 
Rather, in Theorem~\ref{VPREInd} we shall consider class-length
reverse Easton iterations of increasingly directed-closed set forcings.
However, such class forcings are very well behaved; they are
tame in the sense of Friedman~\cite{SDF:FSCF}, and
in particular they preserve ZFC and have a definable forcing relation
as for set forcing.

The first issue to address is that of names.  Thanks to the definability of the
forcing relation, we can have ground model ``names'' for classes in the 
extension, in the following sense.
\begin{lemma}
Let $V[G]$ be a (set- or tame class-) generic extension of $V$,
and let $A$ be a definable class in $V[G]$.
Then there is a definable class $\dot A$ in $V$ such that
for every $x\in V[G]$, $x\in A$ if and only if there is 
a $\langle\dot x, p\rangle\in\dot A$ such that $\genval{(\dot x)}{G}=x$ and
$p\in G$.
\end{lemma}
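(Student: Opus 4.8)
The plan is to read a name for $A$ directly off a defining formula, using the definability of the forcing relation. Since $A$ is a definable class in $V[G]$, I would first fix a formula $\varphi$ and a parameter $a\in V[G]$ such that for every $x\in V[G]$ we have $x\in A$ if and only if $V[G]\sat\varphi(x,a)$. Choosing a name $\dot a\in V$ with $\genval{\dot a}{G}=a$, this $\dot a$ is a single set of $V$, and so it is available as a parameter for a definition carried out in $V$.

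With this in hand I would set
\[
\dot A=\{\langle\dot x,p\rangle\st p\forces\varphi(\dot x,\dot a)\},
\]
where $\dot x$ ranges over names in $V$ and $p$ over conditions. The key observation is that this is a genuine definable class of $V$: by hypothesis the extension is by a set forcing or by a tame class forcing in the sense of Friedman, so as recalled above the forcing relation $p\forces\varphi(\dot x,\dot a)$ is definable in $V$ (by a formula with parameter $\dot a$). Hence $\dot A$ is definable in $V$.

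The equivalence is then immediate from the forcing theorem. If $x\in A$, then $V[G]\sat\varphi(x,a)$; taking any name $\dot x$ with $\genval{\dot x}{G}=x$, the truth lemma supplies some $p\in G$ with $p\forces\varphi(\dot x,\dot a)$, so $\langle\dot x,p\rangle\in\dot A$ witnesses the required condition. Conversely, if $\langle\dot x,p\rangle\in\dot A$ with $\genval{\dot x}{G}=x$ and $p\in G$, then $p\forces\varphi(\dot x,\dot a)$ together with $p\in G$ yields $V[G]\sat\varphi(x,a)$, that is, $x\in A$. I expect the only delicate point to be the definability of the forcing relation in the class-forcing case, which is precisely what tameness guarantees; for set forcing it is classical, and no further obstacle arises, the construction being the standard passage from a definable class in the extension to a definable name.
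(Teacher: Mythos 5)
Your proof is correct and follows exactly the paper's argument: fix a defining formula and a name for the parameter, then take $\dot A$ to consist of the pairs $\langle\dot x,p\rangle$ with $p\forces\varphi(\dot x,\dot a)$, relying on the definability of the forcing relation (tameness in the class case) and the truth lemma. Your write-up simply spells out the verification that the paper leaves implicit.
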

\begin{proof}
Suppose $A$ is of the form
\[
A=\{x\in V[G]\st \varphi(x,z)\}
\]
for some parameter $z\in V[G]$.  Fix a name $\dot z$ for $z$.
Then
\[
\dot A=
\{\langle\dot x,p\rangle\st p\forces\varphi(\dot x,\dot z)\}
\]
is as required.
\end{proof}
We shall refer to such an $\dot A$ as a \emph{class name}.

We will of course need to use definable class forms of 
some of the properties of \Vopenka cardinals that we have used,
but fortunately these are mostly provided in 
Solovay, Reinhardt and Kanamori~\cite{SRK:SAIEE}.
Using these results we can moreover prove the definable class 
version of Lemma~\ref{VPoLs} without assuming $V=\HOD$.
We begin with the analogue of Definition~\ref{extblBelkaA}.
\begin{defn}
Let $A$ be a proper class.
A cardinal $\al<\eta$ is
\emph{$\eta$-extendible for $A$} 
if there is some $\zeta$ and an elementary
embedding
$j:\langle V_\eta,\in,A\cap V_\eta\rangle\to
\langle V_\zeta,\in,A\cap V_\zeta\rangle$
with critical point $\al$ and $j(\al)>\eta$.
A cardinal $\al$ is \emph{$A$-extendible}
if it is $\eta$-extendible for $A$ for all $\eta>\al.$
\end{defn}
\begin{lemma}\label{VPeq}
The following are equivalent.
\begin{enumerate}
\item\label{VPeqVP} \VPr
\item\label{VPeqAext} 
For every proper class $A$ there is an $A$-extendible cardinal.
\item\label{VPeqOrd} 
For every proper class $A$ of ordinal $\calL$-structures, there
exist $\calM$ and $\calN$ in $A$ of different cardinalities 
such that there
is an elementary embedding $j:\calM\to\calN$.
\end{enumerate}
\end{lemma}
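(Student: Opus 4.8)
The plan is to prove Lemma~\ref{VPeq} by establishing the chain of implications $(\ref{VPeqVP})\Implies(\ref{VPeqAext})\Implies(\ref{VPeqOrd})\Implies(\ref{VPeqVP})$, mirroring at the class level the relationships already used for \Vopenka cardinals in Proposition~\ref{VopCardExtA} and Lemma~\ref{VPoLs}. For $(\ref{VPeqVP})\Implies(\ref{VPeqAext})$, given a proper class $A$, I would apply \VPr to a proper class of structures coding initial segments of $A$, namely structures of the form $\langle V_\eta,\in,A\cap V_\eta\rangle$ augmented with a rank marker so that distinct $\eta$ give non-isomorphic structures. An elementary embedding between two such structures yields, by the usual Kunen-style argument, an embedding with a critical point; iterating this idea (as in the proof of Proposition~\ref{VopCardExtA}, i.e.\ Exercise~24.19 of Kanamori~\cite{Kan:THI}) produces a genuinely $A$-extendible cardinal rather than merely an extendible-below-some-bound one. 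This is exactly where I expect to invoke the results of Solovay, Reinhardt and Kanamori~\cite{SRK:SAIEE} rather than reprove them.

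For $(\ref{VPeqAext})\Implies(\ref{VPeqOrd})$, suppose $A$ is a proper class of ordinal $\calL$-structures and let $\al$ be $A$-extendible, where here I would take the parameter class to be $A$ itself (coded so that membership in $A$ and the domain-ordinals of its elements are recoverable in each $\langle V_\eta,\in,A\cap V_\eta\rangle$). Choosing some $\calM\in A$ with domain $\ga_\calM\geq\al$ and picking $\eta$ large enough that $\calM\in V_\eta$, an $\eta$-extendibility embedding $j:\langle V_\eta,\in,A\cap V_\eta\rangle\to\langle V_\zeta,\in,A\cap V_\zeta\rangle$ sends $\calM$ to some $\calN=j(\calM)\in A$. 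The restriction $j\restr\ga_\calM:\calM\to\calN$ is then an $\calL$-elementary embedding by the $\Sigma_1$-correctness/definability of satisfaction, and since $\crit(j)=\al\leq\ga_\calM$ while $j(\al)>\eta>\ga_\calM$, the domains $\ga_\calM$ and $\ga_\calN=j(\ga_\calM)$ have different cardinalities; so $\calM$ and $\calN$ are as required.

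The implication $(\ref{VPeqOrd})\Implies(\ref{VPeqVP})$ is the one needing the most care, as it is the class analogue of Lemma~\ref{VPoLs} that the remark following that lemma flagged as problematic without $V=\HOD$. The naïve route replaces each structure in a given proper class by an isomorphic ordinal structure, but selecting representatives requires a global choice of isomorphisms. To avoid $V=\HOD$, I would instead use a \emph{canonical} coding: given a definable proper class of $\Lstd$-structures, pass to the class of all ordinal $\Lstd$-structures isomorphic to some member, which is itself a definable proper class, and apply $(\ref{VPeqOrd})$ to it to obtain $\calM'\cong\calM$ and $\calN'\cong\calN$ of different cardinalities with an elementary $j':\calM'\to\calN'$; composing with the (abstract, not uniformly chosen) isomorphisms witnessing membership yields an elementary embedding between the corresponding distinct original structures $\calM$ and $\calN$, distinctness being guaranteed by the difference in cardinality. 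The hard part will be verifying that this enlarged class is still proper and that the final composition does not secretly smuggle in a global choice function --- here the point is that we need only \emph{some} single embedding to exist, so we may argue for existence structure-by-structure without a uniform selection, exactly the philosophy emphasised in the introduction and in the proof of Theorem~\ref{smallpres}.
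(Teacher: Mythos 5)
Your proposal follows the paper's proof essentially step for step: (\ref{VPeqVP})$\Implies$(\ref{VPeqAext}) is delegated to Solovay--Reinhardt--Kanamori exactly as in the paper, (\ref{VPeqAext})$\Implies$(\ref{VPeqOrd}) uses the same device of applying an extendibility embedding to a structure of domain $\geq\al$ so that the image lands at a genuinely larger cardinality, and (\ref{VPeqOrd})$\Implies$(\ref{VPeqVP}) uses the paper's trick of passing to the class of \emph{all} ordinal structures isomorphic to members of $A$ rather than choosing representatives, with distinctness recovered from the cardinality gap. The only quibble is that in (\ref{VPeqAext})$\Implies$(\ref{VPeqOrd}) you should make explicit that $\al$ being a cardinal gives $|\dom(\calM)|\geq\al$ and hence $|j(\calM)|=j(|\calM|)\geq j(\al)>\eta>|\calM|$ --- the ordinal inequality $j(\ga_\calM)>\ga_\calM$ alone does not separate cardinalities --- but this is the same computation the paper compresses into one line.
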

\begin{proof}
(\ref{VPeqVP})$\Implies$(\ref{VPeqAext}) here is (1)$\Implies$(2) of
Theorem~6.9 of Solovay, Reinhardt and Kanamori~\cite{SRK:SAIEE}; 
briefly, structures are constructed for each $\al$ with reference to the
least failure of $\eta$-extendibility of $\al$ for $A$,
and then a \VPr embedding for this class of structures would yield a
contradiction if there were no $A$-extendible $\al$.

For (\ref{VPeqAext})$\Implies$(\ref{VPeqOrd}), let $A$ be as in 
(\ref{VPeqOrd}), and let $\al$ be $A$-extendible.  
If $\calM\in A$ is such that $\dom(\calM)\geq\al$, and $j$ witnesses that
$\al$ is \mbox{$(\rank(\calM)+1)$}-extendible for $A$, then 
$j\restr\dom(\calM):\calM\to j(\calM)$ is elementary, and 
$j(\calM)\in A$ has cardinality 
$|j(\calM)| \neq|\calM|\geq\al$.

Finally, for (\ref{VPeqOrd})$\Implies$(\ref{VPeqVP}),
let $A$ be a class of $\Lstd$-structures.  
Let $B$ be the class of \emph{all} ordinal $\Lstd$-structures $\bar\calM$
such that $\bar\calM$ is isomorphic to some $\calM$ in $A$.
By the Axiom of Choice, there will be elements of $B$ isomorphic to any
given element of $A$, but since we are not choosing representatives,
we do not need to appeal to definable global choice, that is, $V=\HOD$.
Applying (\ref{VPeqOrd}) to $B$, we get members of $B$ with 
an elementary embedding between them which have different cardinalities,
and thus correspond to different elements of $A$.  We thus get an
elementary embedding between distinct members of $A$, as required.
\end{proof}

We are now ready to translate our results to the definable class setting.
The natural L\'evy--Solovay theorem for definable \VPr 
is with ``set-sized'' taking the place of ``small''.
\begin{thm}\label{setpres}
Suppose \VPr holds in $V$, 
$\P$ is a partially ordered set in $V$, and $G$ is $\P$-generic over $V$.  
Then \VPr holds in $V[G]$.
\end{thm}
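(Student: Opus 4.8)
The plan is to mimic the proof of Theorem~\ref{smallpres}, replacing the bounded sets of structures by definable proper classes and appealing to \VPr in $V$ in place of \Vopenka-ness at some fixed $\ka$. By the equivalence of (\ref{VPeqVP}) and (\ref{VPeqOrd}) in Lemma~\ref{VPeq}, it suffices to show that in $V[G]$, for every definable proper class $A$ of ordinal $\Lstd$-structures there are distinct $\calM,\calN\in A$ with an elementary embedding $j:\calM\to\calN$. I would fix such an $A$, take a class name $\dot A$ for it as provided by the preceding lemma on class names, and fix $p\in G$ forcing that $\dot A$ is a proper class of ordinal $\Lstd$-structures. As in Theorem~\ref{smallpres}, the strategy is to show that it is dense below $p$ to force the existence of such an embedding, whence $p$ itself forces it.

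For the density step, given $r\leq p$, I would apply the class analogue of Lemma~\ref{nicenames} (with $r$ in place of $p$) to obtain a class name $\dot B_r$ with $r\forces\dot A=\dot B_r$, each of whose members $\langle\sigma,q\rangle$ has $q\leq r$ and $\sigma$ a nice $\P$-name for an ordinal $\Lstd$-structure. Since $\P$ is a set, this reduction is uniformly definable with $\P$ and $\dot A$ as the only parameters, so $\dot B_r$ is a bona fide definable class. Thinning $\dot B_r$ to a class name $\dot C$ whose members have pairwise distinct domain-ordinals $\ga_\sigma$ preserves properness (a proper class of ordinal structures has unboundedly many distinct domains) and guarantees distinctness of the eventual $\calM$ and $\calN$. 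Fixing a rigid binary relation $R_\P$ on $\P$, I would then form, in $V$, the definable proper class
\[
D=\left\{\left\langle H_{\max(|\trcl(\sigma)|,|\P|)^+},\in,\langle\sigma,q,R_\P\rangle\right\rangle\st\langle\sigma,q\rangle\in\dot C\right\},
\]
and apply \VPr in $V$ to obtain distinct $\calM,\calN\in D$ with an elementary embedding $j:\calM\to\calN$, with distinguished names $\sigma_\calM,\sigma_\calN$ and conditions $q_\calM,q_\calN$.

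The transfer then proceeds exactly as in Theorem~\ref{smallpres}: the rigidity of $R_\P$ forces $j(q)=q$ for every $q\in\P$ (so in particular $q_\calM=q_\calN$), while the $\Sigma_1$-correctness of each $H_\theta$, via L\'evy's theorem, carries the $\Sigma_1$ statement ($*$) asserting that some condition forces $\sigma_\calM\sat\varphi$. Consequently $j\restr\ga_{\sigma_\calM}$ is forced by $q_\calM$ to be an elementary embedding $\sigma_\calM^{G'}\to\sigma_\calN^{G'}$ between distinct members of $\dot C^{G'}\subset\dot A^{G'}$ for any $\P$-generic $G'\ni r$. Since $q_\calM\leq r$ by construction of $\dot B_r$, this shows $q_\calM\forces\VP(\dot A)$, and as $r\leq p$ was arbitrary, density below $p$ gives $p\forces\VP(\dot A)$; because $p\in G$, the class $A$ witnesses \VPr in $V[G]$.

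The main obstacle, and the only genuinely new feature beyond Theorem~\ref{smallpres}, is ensuring that each object to which \VPr is applied in $V$ is a legitimate definable class. The nice-name reduction producing $\dot B_r$, the thinning to $\dot C$, and the formation of $D$ must all be given by uniform definitions taking only $\P$ and the defining formula of $A$ as parameters; once this is verified, no appeal to global choice (equivalently, $V=\HOD$) is required, exactly as in the avoidance of choice in Lemma~\ref{VPeq}, and the set-forcing machinery of Theorem~\ref{smallpres} then applies essentially verbatim.
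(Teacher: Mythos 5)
Your overall strategy is the paper's: reduce to ordinal structures via Lemma~\ref{VPeq}, run the nice-name, thinning, rigid-relation and $\Sigma_1$-correctness machinery of Theorem~\ref{smallpres} with definable proper classes in place of sets, and apply \VPr in $V$ to the proper class $D$. But there is one genuine slip in the reduction step. You assert that by the equivalence of (\ref{VPeqVP}) and (\ref{VPeqOrd}) it suffices to find, for each definable proper class of ordinal $\Lstd$-structures in $V[G]$, two \emph{distinct} members with an elementary embedding between them. That is not what clause (\ref{VPeqOrd}) says: it requires two members of \emph{different cardinalities}. The stronger requirement is not cosmetic. The proof of (\ref{VPeqOrd})$\Implies$(\ref{VPeqVP}) avoids global choice by replacing an arbitrary class $A$ of structures with the class $B$ of \emph{all} ordinal structures isomorphic to some member of $A$, and two distinct members of $B$ admitting an embedding could both be isomorphic to the same member of $A$, in which case they yield no witness to $\VP$ for $A$; only the different-cardinality hypothesis guarantees they correspond to distinct members of $A$. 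Consequently your thinning of $\dot B_r$ to a class name $\dot C$ with pairwise distinct domain ordinals $\ga_\sigma$ is not enough, since distinct ordinals can have the same cardinality.

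The fix is exactly what the paper does: thin $\dot C$ so that for different $\sigma_0$ and $\sigma_1$ appearing in it, $|\ga_{\sigma_0}|\neq|\ga_{\sigma_1}|$ --- this is possible because a name forced to denote a proper class of ordinal structures must involve domains of unboundedly many cardinalities --- so that the two structures ultimately produced in $V[G']$ have different cardinalities and Lemma~\ref{VPeq} genuinely applies. One further small point: the nice-name construction (which chooses maximal antichains) and the thinning are not definable from $\P$ and the defining formula of $A$ alone; you need a well-order of the set of antichains of $\P$ as an additional set parameter. This is harmless for definability of the resulting classes, but it is precisely the amount of choice the paper flags as being used, and your claim of uniform definability from $\P$ and $\dot A$ alone should be amended accordingly.
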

\begin{proof}
The proof is as for Theorem~\ref{smallpres}.
Lemma~\ref{nicenames} is equally valid for class names $\dot A$, 
using no more choice than a well-order on the set of antichains of $\P$.
This much choice is also sufficient for 
the thinning out of $\dot B_r$ to a name $\dot C$,
thanks to the simple form of the names in $\dot B_r$;
moreover, $\dot C$ can be taken such that for different $\sigma_0$
and $\sigma_1$ appearing in $\dot C$, 
$|\ga_{\sigma_0}|\neq|\ga_{\sigma_1}|$, so that Lemma~\ref{VPeq} will apply.
The family $D$ of structures of the proof of Theorem~\ref{smallpres} 
now becomes a proper class, and the rest of the proof goes through unchanged.
\end{proof}

We thus come to the definable form of our main theorem.
\begin{thm}\label{VPREInd}
Assume \VPr.
Suppose 
\[
P=\lim_{\longrightarrow}(\langle P_\al\st\al\in\ORD\rangle)
\]
is the reverse Easton iteration of 
$\langle\dot Q_\al\st\al\in\ORD\rangle$,
where
for every ordinal $\ga$, there is an $\eta_0$ such that for all
$\eta\geq\eta_0$,
\[
\forces_{P_\eta}\dot Q_\eta\text{ is $\ga$-directed closed.}
\]
Then in any $P$-generic extension, \VPr holds.
\end{thm}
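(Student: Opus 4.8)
The plan is to run the argument of Theorem~\ref{REInd} essentially verbatim, with three modifications forced by the move to definable proper classes: names become class names, the ground-model application of the extendibility characterisation is replaced by Lemma~\ref{VPeq}, and extra care is needed both to avoid global choice and to extract an embedding between structures of \emph{different cardinalities}, so that Lemma~\ref{VPeq} applies in the extension. Since the iteration is tame, ZFC and a definable forcing relation survive into $V[G]$, so by Lemma~\ref{VPeq} (read in $V[G]$) it suffices to verify clause~(\ref{VPeqOrd}) there: for every definable proper class $A$ of ordinal $\Lstd$-structures in $V[G]$ I must produce distinct $\calM$ and $\calN$ in $A$ of different cardinalities together with an elementary embedding $\calM\to\calN$. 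First I would fix such an $A$, take a ground-model class name $\dot A$ for it and a condition $p\in G$ forcing $\dot A$ to be such a class, and pass to a class name $\dot B$ of nice local names via the class-name version of Lemma~\ref{nicelocnames}, whose proof uses only the definability of forcing and the factorisation $P\cong P_{\de(\ga)}*\dot P^{[\de(\ga),\ORD)}$, both available for tame class forcing. As in Theorem~\ref{setpres}, the conversion of $\dot A$ to $\dot B$ needs only a well-order of the antichains of each $P_\al$, not global choice.

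Next I would locate the critical point. Applying clause~(\ref{VPeqAext}) of Lemma~\ref{VPeq} in $V$ to the proper class coding \emph{both} $\dot B$ \emph{and} the iteration sequence $\langle\dot Q_\al\st\al\in\ORD\rangle$, I obtain a cardinal $\al$ extendible for this combined class; folding the iteration into the predicate guarantees that the witnessing embeddings respect the iteration, and therefore preserve the property of being a reflecting Mahlo cardinal (the evident $\ORD$-length analogue of Definition~\ref{PkaRefl}), so that the identities $V_\xi[G_\xi]=V_\xi^{V[G]}$ used below transfer under $j$ exactly as in Theorem~\ref{REInd}. Because $A$ is a proper class there is some $\langle\sigma,q\rangle\in\dot B$ with $q\in G$ whose structure has domain $\ga\geq\al$; let $\xi$ be a reflecting Mahlo cardinal above both $\ga$ and $\sup(\supp(q))$ — these are stationary by the $\ORD$-length analogue of Lemma~\ref{PClosRefl} — so that $q\in P_\xi$ and the relation names of $\sigma$ are $P_\xi$-names. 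I then factor $P\cong P_\xi*\dot P^{[\xi,\ORD)}$ and set $G_\xi=G\cap P_\xi$.

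The lifting step is now identical to Theorem~\ref{REInd}. Given an arbitrary $\dot r$ forced into the tail, I choose a reflecting Mahlo cardinal $\eta>\xi$ large enough that $\dot r$ is forced into $\dot P^{[\xi,\eta)}$ and that $\dot P^{[\eta,\ORD)}$ is forced to be $|G_\xi|^+$-directed-closed, and take $j\colon V_\eta\to V_\la$ witnessing that $\al$ is $\eta$-extendible for the combined class, with $\crit(j)=\al$ and $j(\al)>\eta$. The support argument, using that $\al$ is inaccessible, produces the extension $j(q)\land r$; the directedness of $j``G_\xi$ yields a master condition $g$ in the $|G_\xi|^+$-directed-closed class tail $(P^{[j(\al),\ORD)})^{V[G_\xi]}$; and $g\land r$ forces $j$ to lift to an elementary $j'\colon V_\xi[G_\xi]\to V_{j(\xi)}[G_{j(\xi)}]$. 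Since $\langle j(\sigma),j(q)\rangle\in\dot B$ and $g\land r$ forces $j(q)\in\dot G_{j(\xi)}$, the definability of satisfaction gives that $g\land r$ forces $j'\restr\genval{\sigma}{G}\colon\genval{\sigma}{G}\to\genval{j(\sigma)}{G}$ to be elementary between two members of $A$, and it is non-trivial since the domain $\ga$ of $\genval{\sigma}{G}$ is at least $\crit(j)$. As this is dense below $r$ and $\dot r$ was arbitrary, $p$ forces the existence of such an embedding.

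The one genuinely new point, and the step I expect to be the main obstacle, is the \emph{different cardinalities} demanded by clause~(\ref{VPeqOrd}): I must verify $|\ga|\neq|j(\ga)|$ in $V[G]$, not merely $\ga\neq j(\ga)$. Here I would exploit that $\crit(j)=\al$ is large. On the one hand $\ga<\xi$ with $\xi$ reflecting Mahlo, so $P_\xi$ is $\xi$-cc and $\dot P^{[\xi,\ORD)}$ adds no new sequences of length below $\xi$, whence $|\ga|^{V[G]}<\xi$. On the other hand $j(\ga)\geq j(\al)$ with $j(\al)$ inaccessible, so $|j(\ga)|^V\geq j(\al)>\eta$; any collapse of $j(\ga)$ to an ordinal below $\xi$ in $V[G]$ would produce a sequence of length below $\xi$, which the $|G_\xi|^+$-directed-closed tail $\dot P^{[\eta,\ORD)}$ cannot add, so such a collapse would already occur in $V[G_\eta]$, where $P_\eta$ being $\eta$-cc preserves the $V$-cardinal $|j(\ga)|^V>\eta$ — a contradiction. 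Hence $|j(\ga)|^{V[G]}\geq\xi>|\ga|^{V[G]}$, so $\genval{\sigma}{G}$ and $\genval{j(\sigma)}{G}$ are members of $A$ of distinct cardinalities with an elementary embedding between them, which is exactly clause~(\ref{VPeqOrd}); Lemma~\ref{VPeq} then delivers \VPr in $V[G]$.
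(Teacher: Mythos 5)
Your proposal is correct and follows essentially the same route as the paper, whose own proof simply asserts that the supporting lemmas (class names, the analogue of Lemma~\ref{PClosRefl} via Solovay--Reinhardt--Kanamori, the analogue of Lemma~\ref{nicelocnames}, and Lemma~\ref{VPeq} in place of Proposition~\ref{VopCardExtA}) carry over and that the argument of Theorem~\ref{REInd} then ``translates smoothly''. The one point where you go beyond the paper's sketch --- checking that $\genval{\sigma}{G}$ and $\genval{j(\sigma)}{G}$ have different cardinalities in $V[G]$ so that clause~(\ref{VPeqOrd}) of Lemma~\ref{VPeq} applies --- is precisely the detail the paper leaves implicit, and your closure/chain-condition argument for it is sound (modulo arranging the tail beyond $\eta$ to be $\xi^+$-directed-closed rather than merely $|G_\xi|^+$-directed-closed, which the hypotheses and the reflecting choice of $\xi$ readily permit).
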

\begin{proof}
Theorem~6.6 of Solovay, Reinhardt and Kanamori~\cite{SRK:SAIEE}
shows that extendible cardinals, and so in particular Mahlo cardinals,
are stationary in $\ORD$, and so
the analogue of Lemma~\ref{PClosRefl} goes through.
Converting Lemma~\ref{nicelocnames} is unproblematic.
Lemma~\ref{VPeq} above (or indeed Theorem~6.9 of \cite{SRK:SAIEE}) 
gives the appropriate analogue of
Proposition~\ref{VopCardExtA}, and the rest of the proof translates smoothly.
\end{proof}

\begin{coroll}\label{VPCon}
If \VPr is consistent, then Vo\-p\v{e}n\-ka's Principle
is also consistent with each of the following.
\begin{enumerate}
\item GCH
\item $V=\HOD$
\item $\diamondsuit^+_{\ka^+}$ holds for every infinite cardinal $\ka$.
\item\label{VPmorass} Morasses exist at every uncountable cardinal.
\hfill\qedsymbol
\end{enumerate}
\end{coroll}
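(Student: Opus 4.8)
The plan is to mirror the proof of Corollary~\ref{VcCon}, replacing the set iterations that preserved a single \Vopenka cardinal with the class-length iterations governed by Theorem~\ref{VPREInd}. For each of the four principles it suffices to exhibit a class-length reverse Easton iteration $P=\lim_{\longrightarrow}(\langle P_\al\st\al\in\ORD\rangle)$ of increasingly directed-closed set forcings that forces the principle to hold; Theorem~\ref{VPREInd} then yields that \VPr holds in any $P$-generic extension, which is exactly the desired relative consistency.

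First I would invoke the standard constructions, each of which is (or is routinely presented as) precisely such an iteration. For GCH one takes Jensen's reverse Easton iteration~\cite{Jen:MCG}; for $V=\HOD$ the coding forcings of Brooke-Taylor~\cite{Me:LCDWO} or of Asper\'o and Friedman~\cite{AsF:LCLDWOU}, which in fact force a definable well-order of the universe; for $\diamondsuit^+_{\ka^+}$ the iteration of Cummings, Foreman and Magidor~\cite[Section~12]{CFM:SSSR}; and for morasses the iteration of Velleman~\cite{Vell:MDF,Vell:SiM} or of Brooke-Taylor and Friedman~\cite{Me:LCMor}. In each case the stage forcings grow more closed as the iteration proceeds, so that for every ordinal $\ga$ there is an $\eta_0$ with $\forces_{P_\eta}\dot Q_\eta$ being $\ga$-directed closed for all $\eta\geq\eta_0$, which is precisely the hypothesis of Theorem~\ref{VPREInd}.

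Two points merit attention. Since these are now proper-class iterations, I must confirm that each is tame in the sense of Friedman~\cite{SDF:FSCF}, so that ZFC is preserved and the forcing relation is definable; as noted at the start of this section, reverse Easton iterations of increasingly directed-closed set forcings are tame, and this is exactly what licenses the application of Theorem~\ref{VPREInd}. Moreover, in contrast to part~\ref{morass} of Corollary~\ref{VcCon}, morasses may here be forced at \emph{every} uncountable cardinal, with no cardinal excluded. This reflects the central feature of the argument behind Theorem~\ref{REInd}: \VPr is preserved without retaining any particular witnessing embedding, so there is no single cardinal whose large-cardinal status must be insulated from the morass forcing, and the global principle survives even as the morass forcing alters the local large-cardinal structure.

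The main point to verify — and essentially the only content beyond Theorem~\ref{VPREInd} itself — is that each cited forcing genuinely satisfies the increasing-directed-closure hypothesis. This is standard for all four constructions; the heavy lifting having already been carried out in the proof of Theorem~\ref{VPREInd}, the relative consistency of each principle with \VPr then follows at once.
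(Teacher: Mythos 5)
Your proposal is correct and follows exactly the route the paper intends: the paper gives no separate proof for this corollary (hence the \qedsymbol in the statement), treating it as immediate from Theorem~\ref{VPREInd} applied to the same standard reverse Easton iterations cited in the proof of Corollary~\ref{VcCon}. Your additional remarks on tameness and on why the morass clause no longer needs to exclude any cardinals are accurate and consistent with the paper's discussion.
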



\bibliographystyle{asl}
\bibliography{logic}

\end{document}